\documentclass[12pt]{amsart}
\usepackage{amssymb,amscd, amsfonts, amsxtra}
\usepackage{amsmath, amsthm}
\usepackage{psfrag}
\usepackage{amsrefs}
\usepackage{graphicx,epstopdf}
\usepackage{verbatim}
\usepackage{tikz-cd}

\usepackage[margin=3.5cm]{geometry}

\newcounter{SideCounter}

\newtheorem*{theorem*}{Theorem}

\let\from\colon

\def\S{\mathbb S}
\def\C{\mathbb C}
\def\Z{\mathbb Z}
\def\chat{\hat\C}
\def\M2{{\mathcal M}^{cm}_2}
\def\tp{\tilde{p}}
\def\tq{\tilde{q}}
\def\qg{\mathcal{Q}_\Gamma}
\let\goesto\to

\begin{document}

\begin{center}
{\bf On deformation spaces of quadratic rational functions}
\end{center}

\begin{center}
{\it  Tanya Firsova\footnote{Research of the first author was supported in part by NSF.}, Jeremy Kahn \footnote{Research of the second author was supported in part by NSF.}, Nikita Selinger}
\end{center}

\begin{center}
{\bf Abstract}
\end{center}
We study the group of self-equivalences of a partially postcritically finite branched cover and show  that certain deformation spaces of rational maps are not contractible.

\newtheorem{lemma}{Lemma}[section]
\newtheorem{definition}[lemma]{Definition}
\newtheorem{theorem}[lemma]{Theorem}
\newtheorem{note}[lemma]{Note}
\newtheorem{corollary}[lemma]{Corollary}
\newtheorem{question}[lemma]{Question}
\newcommand{\Teich}{\operatorname{Teich}}
\newcommand{\ATeich}{\operatorname{\overline{Teich}}}
\newcommand{\Per}{\operatorname{Per}}
\newcommand{\Def}{\operatorname{Def}}
\newcommand{\Mod}{\operatorname{Mod}}
\newcommand{\CC}{\mathbb{C}}
\newcommand{\PP}{\mathbb{P}^1}
\newcommand{\Rat}{\operatorname{Rat}}
\newcommand{\PSL}{\operatorname{PSL}}
\newcommand{\Map}{\operatorname{MCG}}
\newcommand{\Ker}{\operatorname{Ker}}
\newcommand{\Push}{\operatorname{Push}}
\newcommand{\id}{\operatorname{id}}

\section{Introduction} 

The introduction of complex methods to $1$-dimensional dynamics initiated a thorough study of parameter spaces of dynamical systems. The structure of parameter spaces, which turned out to be closely related to the structure of dynamical systems themselves, yields a better understanding of how perturbations of dynamical systems affect the dynamics. Higher dimensional parameter spaces, such as parameter space of polynomials of degree at least 3  or rational functions of degree at least 2, are significantly harder to visualize and study. It is natural to consider parameter spaces of rational functions with some prescribed dynamical properties; these are curves (algebraic varieties) defined by certain postcritical relations.  

In \cite{Milnor_quad_rational}, J. Milnor studied the space ${\mathcal M}_2$ of equivalence classes of quadratic rational maps with marked critical points. The maps are equivalent if they are conjugate by a M\"{o}bius transformation. He showed that ${\mathcal M}_2$ is biholomorphic to ${\mathbb C}^2$. Each representative $f\in {\mathcal M}_2$ has two critical points. We denote them by $c_1, c_2$. J.Milnor identified and studied the following $1$-dimensional curves:
 
$$\Per_n=\{f\in {\mathcal M}_2:\, f^k(c_1)=c_1\,  \mbox{if and only if}\, n|k \}.$$ 


The deformation space approach to the study of varieties defined by postcritically finite relations was suggested by A.Epstein. 
Inspired by the work of W.Thurston on postcritically finite maps, he introduced deformation spaces into holomorphic dynamics \cite{Epstein_FS}, \cite{Epstein_transversality}.  The cornerstone of W.Thurston's approach to postcritically finite maps is the pull-back map on the Teichm\"{u}ller space induced by the branched cover. A.Epstein used the pull-back map to define deformation spaces.

In this preprint we study topological properties of the deformation spaces. In Section \ref{sec:background} we recall the necessary background. In Section~\ref{sec:Def} we define deformation spaces. In Section \ref{sec:groups} we define a group of self-equivalences, a natural generalization of centralizers for obstructed postcriticaly finite branched covers \cite{BD}. These are subgroups of the mapping class group naturally associated to deformation spaces.  We introduce equalizing and arithmetically equalizing multicurves, natural analogs of W.Thurston invariant multicurves in Section \ref{sec:multicurves}. We give conditions on the accumulation of deformation spaces to the strata in the augmented Teichm\"{u}ller space in terms of arithmetically equalizing curves. In particular, we establish the following.

\begin{theorem*}  $f^*\mathcal{S}_{\Gamma}=i_*\mathcal{S}_{\Gamma}$ if and only if $\Gamma$ is an equalizing multicurve. If the deformation space accumulates on $\mathcal{S}_{\Gamma}$, then  there exists $\tilde{\Gamma}\subset \Gamma$ such that $\tilde{\Gamma}$ is an arithmetically equalizing multicurve.
\end{theorem*}

 We also show that each arithmetically equalizing multicurve give rise to a self-equivalence. In Section \ref{sec:per4} we study the example of $\Per'_4$ deformation space. 
 
A celebrated theorem of W. Thurston provides a topological criterion for when a postcritically finite branched covering is equivalent to a rational map. We give conditions that guarantee that a deformation space is not empty (see Section \ref{sec:vanishing}).

A. Epstein used deformation spaces to prove transversality results in holomorphic dynamics \cite{Epstein_transversality}. In particular, he proved  \cite{Epstein_transversality}\label{thm:Adam}  that if $f$ is not exceptional, i.e. $f$ is not a flexible Lattes example, then $\Def(f,A,B)$ is either empty or a smooth submanifold of $\Teich(B)$ of dimension $\#B-\#A$.


He asked whether  deformation spaces are contractible. 
In Section \ref{sec:noncontractible} we give a simple condition that guarantees that the deformation
space is not contractible. We prove that  deformation spaces corresponding to maps in $\Per'_n$, $n\geq 4$  satisfy
this condition. 

\begin{theorem*} There exists $f\in \Per'_n, n\ge 4$ such that $\Def(f,A,B)$  is not contractible.
\end{theorem*}

E. Hironaka and S. Koch proved that deformation space for $\Per'_4$ maps is not connected \cite{HK}. See also \cite{Rees} for relevant results.

In the Appendix we give a classification of equalizing curves for $\Per'_4$ deformation space. 
\subsection{Acknowledgements} We would like to thank Jan Kiwi for his valuable suggestions. 

\section{Background. Notations.}\label{sec:background}

Let $P \subset \S^2$ be a finite subset. 
First we summaries the notations in the following table, then we recall the definitions.  

\vspace{0.2cm}
\noindent\begin{tabular}{| c | l |}

\hline

$\Teich(P)$ &  Teichm\"{u}ller space of the sphere with the marked set $P$\\

\hline

$\Mod(P)$ & Moduli space of the sphere with the marked set $P$ \\
\hline

$\Map(P)$ & (pure) mapping class group of $\Teich(P)$\\
\hline

$\pi:\Teich(P)\to \Mod(P)$ & the natural projection from $\Teich(P)$ to $\Mod(P)$\\
\hline

${\mathcal C}(P)$ & the space of equivalence classes of simple closed curves\\ 
& on $({\mathbb S}^2, P)$ \\
\hline

${\mathcal M}(P)$ & the space of equivalence classes of multicurves \\ 
& on $({\mathbb S}^2, P)$\\
\hline

${\mathcal W}(P)$ & the weighted multicurves on $({\mathbb S}^2, P)$\\
\hline

$S_{\Gamma}$ & the stratum in the augmented Teichm\"{u}lller space \\
& obtained by pinching the multicurve $\Gamma$\\
\hline
\end{tabular}
\vspace{0.2cm}

An element of $\Teich(P)$ is an equivalence class of homeomorphisms $\phi:({\mathbb S}^2, B)\to (\chat, \phi(B))$. Two such homeomorphisms $\phi_1$ and $\phi_2$ are equivalent if there exist a homeomorphism $\beta:({\mathbb S}^2, P)\to ({\mathbb S}^2, P)$, homotopic to the identity, and a M\"{o}bius transformation $\alpha$, such that the following diagram commutes. 

\begin{tikzcd}
({\mathbb S}^2, P) \arrow{r}{\phi_1} \arrow{d}{\beta}  & (\chat, \phi_1(P)) \arrow{d}{\alpha}  \\
({\mathbb S}^2, P) \arrow{r}{\phi_2} &  (\chat, \phi_2(P))
\end{tikzcd}

In our setting, the Teichm\"{u}ller space is the universal cover of the moduli space. The mapping class group $\Map(P)$ is naturally identified with the group of deck transformations of the cover $\pi$.


%
%




Consider non-trivial, non-peripheral, simple closed curves on $({\mathbb S}^2\setminus P)$. Let $\mathcal{C}(P)$ be the set of equivalence classes of such curves up to free homotopy relative to $P$. We say that two distinct equivalence classes do not intersect if they have disjoint representatives. 
 We denote the class of $\gamma\in ({\mathbb S}^2, P)$ by $[\gamma]_P$.  We say that $\Gamma=\{\gamma_1,\dots, \gamma_n\}$ is {\it a multicurve} on $({\mathbb S}^2,  P)$ if the curves $\gamma_i$ represent mutually disjoint equivalence classes. Two multicurves $\Gamma$, $\tilde{\Gamma}$ are equivalent if for each $\gamma\in \Gamma$ there exists $\tilde{\gamma}\in \tilde{\Gamma}$ such that $[\gamma]_P=[\tilde{\gamma}]_P$ and vice  versa. We denote by $\mathcal{M}(P)$ the set of equivalence classes of multicurves.


Let $\Gamma$ be a multicurve on $({\mathbb S}^2,  P)$; we denote by $({\mathbb S}^2,  P)/\Gamma$ the topological space with marked points obtained by collapsing each curve in $\Gamma$ to a point. This space, a nodal sphere, is a collection of spheres with marked points attached to each other at special points that we will call \emph{nodes}. Note that since every curve in $\Gamma$ is non-peripheral and no two curves in $\Gamma$ are homotopic to each other, there are at least three special points (either marked points, or nodes) on every sphere in the collection.

We will call the  Teichm\"{u}ller space of the nodal sphere $({\mathbb S}^2,  P)/\Gamma$, defined as the product of  Teichm\"{u}ller spaces of components of that sphere, \emph{the stratum} $\mathcal{S}_\Gamma$ corresponding to the (equivalence class of) multicurve $\Gamma$. Taking the union of all strata we obtain the augmented Teichm\"{u}ller space: $$\ATeich(P):=\bigcup_{\Gamma \in \mathcal{M}(P)} \mathcal{S}_\Gamma.$$ Note that, by definition, $\Teich(P)=\mathcal{S}_\emptyset \subset \ATeich(P)$.

A point on a stratum $\mathcal{S}_\Gamma$ can be represented by a continuous map $\phi$ from $({\mathbb S}^2,  P)$ onto a nodal Riemann surface with marked points which is surjective and injective except that each curve in $\Gamma$ is collapsed to a node. The equivalence relation on such maps is a straightforward generalization of the equivalence relation on homeomorphisms defined above.

The geodesic length functions $l_\gamma$ for $\gamma \in \mathcal{C}(P)$ extend to $\ATeich(P)$ by setting  $l_\gamma=0$ if $\gamma$ is homotopic to a node, $l_\gamma=\infty$ if $\gamma$ has non-trivial intersection with at least one of the nodes. In the remaning case, $\gamma$ is a curve on one of the components of the nodal surface and we measure $l_\gamma$ with respect to the hyperbolic metric of that component. The topology of the augmented Teichm\"{u}ller space can be defined by requiring that $\tau_k \to \tau$ if and only if $l_\gamma(\tau_k) \to l_\gamma(\tau)$ for all $\gamma \in \mathcal{C}(P)$. 

\section{Deformation spaces}
\label{sec:Def}
Recall that following J. Milnor we define the space of all quadratic rational maps with a critical orbit of period $n$:
$$\Per_n=\{f\in {\mathcal M}_2:\, f^k(c_1)=c_1\,  \mbox{if and only if}\, n|k \}.$$ 

Let $\tilde{A}=\{c_1, f(c_1), \dots f^{\circ (n-1)}(c_1)\}$. 
For all but finitely many classes $f\in \Per_n$, the second critical value $f(c_2)$ does not enter the critical cycle.
Let $$\Per_n'=\{f\in \Per_n:\ f(c_2)\not \in \tilde{A}\}.$$
Let $\tilde{B}=\tilde{A}\cup \{f(c_2)\}.$
Let $\Mod_{\tilde{B}}$ be the modular space of the sphere with the set $\tilde{B}$ marked. There is a natural embedding of embedding of $\Per'_n$ curve into $\Mod_{\tilde{B}}.$
The map $$j: \Per'_n\to \Mod_{\tilde{B}},$$ $$j(f)=(c_1, f(c_1), \dots, f^{n-1}(c_1), f(c_2))$$ is an injection. 

There is a natural cover $\Def(f,\tilde{A}, \tilde{B})$ of $j(\Per'_n)$ in the Teichm\"uller space $\Teich_{\tilde{B}}$ of the sphere with the set $\tilde{B}$ marked, which is an example of a deformation space. Let us now introduce a general definition of deformation spaces.

Let $f:{\mathbb S}^2\to {\mathbb S}^2$ be a branched cover, $V(f)$ be the  set of critical values of $f$. Let $A, B\subset {\mathbb S}^2$ be finite subsets such that 
$$A\cup f(A)\cup V(f)\subset B.$$
Since $V(f)\subset B$, the map $f:({\mathbb S}^2, f^{-1}(B))\to ({\mathbb S}^2, B)$ is a covering map. By changing the complex structure on $({\mathbb S}^2, B)$ and applying the pull-back, one obtains a complex structure on $({\mathbb S}^2, f^{-1}(B))$. Then by erasing points in $f^{-1}(B)\backslash A$ the complex structure on $({\mathbb S}^2, A)$. This map induces the pull-back map 
 $$f^*: \Teich(B) \to \Teich(A),$$
 where $\Teich (A)$, $\Teich (B)$ are Teichm\"{u}ller spaces of the sphere with sets $A$, $B$ marked.
 Since $A\subset B$, the forgetful map is well-defined:
 $$i_*\Teich( B)\to \Teich(A).$$

Following A. Epstein \cite{Epstein_FS}, \cite{Epstein_transversality} we define the deformation space $\Def(f,A,B)$: 
$$\Def(f,A,B):=\{\tau\in \Teich ({\mathbb S}^2, B) :\quad f^*\tau = i_*\tau\}.$$

Let $f\in \Per'_n$, by taking $A=\tilde{A}$, $B=\tilde{B},$ we obtain the deformation space $\Def(f, A,B)$. 

The deformation space is non-empty if and only if $f$ is equivalent to a rational map, 
that is, there exist a rational map $g\from \chat \to \chat$ and homeomorphisms $\phi, \psi\from \S^2\to \chat$ such that $\phi \sim \psi$ rel $A$ and the following diagram commutes:
$$\begin{CD}
(\S^2, A) @>\psi>> (\chat,\psi(A))\\
@VVfV @VVgV\\
(\S^2, B) @>\phi>> (\chat,\phi(B)).
\end{CD}$$

In fact, each structure $\tau\in\Def(f, A,B)$ defines such a rational map up to conjugacy by  M\"{o}bius transformations.


\begin{theorem} \label{te:aug_teich} The map $f^*$ extends continuously to the augmented Teichm\"{u}ller space.
\end{theorem}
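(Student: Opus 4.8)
The plan is to check continuity directly against the definition of the topology on $\ATeich$, which was given by convergence of all geodesic length functions $l_\gamma$, $\gamma\in\mathcal C(P)$. Thus the goal becomes: for every $\delta\in\mathcal C(A)$ the function $\tau\mapsto l_\delta(f^*\tau)$ extends continuously from $\Teich(B)$ to $\ATeich(B)$, and the resulting boundary values assemble into a well-defined point of $\ATeich(A)$ on the expected stratum. The key geometric input is that, since $V(f)\subset B$, all critical points lie in $f^{-1}(B)$, so $f\from \S^2\setminus f^{-1}(B)\to\S^2\setminus B$ is an \emph{unbranched} covering of degree $\deg f$. For $\tau\in\Teich(B)$ the pulled-back conformal structure $\tilde f^*\tau$ makes this a holomorphic covering of hyperbolic Riemann surfaces; as a covering pulls the complete hyperbolic metric back to the complete hyperbolic metric, $f$ is a local isometry. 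Hence for every essential curve $\gamma$ on $(\S^2,B)$ and every essential component $\tilde\gamma$ of $f^{-1}(\gamma)$ one has the exact identity $l_{\tilde\gamma}(\tilde f^*\tau)=d(\tilde\gamma)\,l_\gamma(\tau)$, where $d(\tilde\gamma)$ is the degree of $f|_{\tilde\gamma}$. The erasing map $i_*$, which forgets the punctures $f^{-1}(B)\setminus A$, leaves the underlying complex structure on the sphere unchanged, so the inclusion of the punctured surfaces is holomorphic and Schwarz--Pick yields $l_\delta(f^*\tau)=l_\delta(i_*\tilde f^*\tau)\le l_\delta(\tilde f^*\tau)$.

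Next I would define the extension stratum by stratum. The cover $f$ induces a map on multicurves: send $\Gamma$ on $(\S^2,B)$ to the collection of essential components of $f^{-1}(\Gamma)$ and then erase $f^{-1}(B)\setminus A$; using the length identity above I would check that this descends to a well-defined class $f^*\Gamma\in\mathcal M(A)$. A point $\tau\in\mathcal S_\Gamma$ is a nodal surface, and on the complement of its nodes $f$ still restricts to an unbranched covering, so pulling back componentwise and erasing produces a nodal structure on $(\S^2,A)/f^*\Gamma$, i.e.\ a point of $\mathcal S_{f^*\Gamma}\subset\ATeich(A)$; this is the proposed value of the extended $f^*$ on $\mathcal S_\Gamma$. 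I would then verify independence of the chosen representatives and that the length identity persists componentwise (with $l_\gamma=0$ for curves homotopic to a node and $l_\gamma=\infty$ for curves meeting a node), which guarantees that the image point lies on the correct stratum.

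For continuity, take $\tau_k\to\tau$ in $\ATeich(B)$, so $l_\gamma(\tau_k)\to l_\gamma(\tau)$ for all $\gamma\in\mathcal C(B)$. For a preimage curve the exact identity gives convergence at once, so the issue is to control $l_\delta(f^*\tau_k)$ for an arbitrary $\delta\in\mathcal C(A)$. The Schwarz--Pick inequality above supplies the upper bound; for a matching lower bound I would invoke the Collar Lemma. A curve that avoids the collars of the short geodesics surrounding the erased punctures has its length altered by filling those punctures only by a factor tending to $1$, whereas a curve forced to cross such a collar must itself be long. Crucially, the only geodesics that can become short on $\S^2\setminus f^{-1}(B)$ project, via the local isometry $f$, to short geodesics on $\S^2\setminus B$, and a short closed geodesic on a hyperbolic surface is simple by the Collar Lemma, hence is one of the pinching curves $\gamma$ with $l_\gamma(\tau)=0$. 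Sandwiching $l_\delta(f^*\tau_k)$ between quantities governed by the downstairs lengths $l_\gamma(\tau_k)$, which converge by hypothesis, yields $l_\delta(f^*\tau_k)\to l_\delta(f^*\tau)$.

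The step I expect to be the main obstacle is exactly this interplay between the erasing map $i_*$ and pinching, that is, establishing the two-sided length estimate for $l_\delta\circ f^*$ \emph{uniformly} up to the boundary. Filling an erased puncture can change whether a curve is essential and can shorten a curve by a large factor precisely when that curve runs through the collar of a short geodesic encircling the puncture; quantifying this effect, and checking that every such degeneration is accounted for by the downstairs pinching data through the covering relation, is the heart of the argument. The remaining points---well-definedness of $f^*\Gamma$ and the bookkeeping that translates classes on $(\S^2,A)$ into classes on $(\S^2,f^{-1}(B))$---are routine once the length identity and the collar estimate are in place.
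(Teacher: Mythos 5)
Your approach is genuinely different from the paper's, which offers no direct argument at all: the authors simply cite Theorem~6.4 of \cite{Selinger} and assert that its proof applies mutatis mutandis. Your ingredients are individually correct: since $V(f)\subset B$, the restriction $f\colon \S^2\setminus f^{-1}(B)\to\S^2\setminus B$ is an unbranched covering, hence a local isometry for the pulled-back hyperbolic structures; the identity $l_{\tilde\gamma}(\tilde f^*\tau)=d(\tilde\gamma)\,l_\gamma(\tau)$ holds for essential preimage components; erasing punctures is holomorphic inclusion, so Schwarz--Pick gives the one-sided bound; and the collar lemma correctly shows that a short geodesic upstairs projects to a short, hence simple, geodesic downstairs, so the thin part of $\tilde f^*\tau_k$ lies exactly over the thin part of $\tau_k$. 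This last point does identify the limiting stratum as the one associated to the essential part of $f^{-1}(\Gamma)$, erased to $f^*\Gamma$.

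However, the step you defer is not a technicality but the entire content of the theorem, and the tool you propose for it --- a two-sided sandwich of $l_\delta(f^*\tau_k)$ by quantities controlled by downstairs lengths --- cannot work as stated. The exact identity controls only curves that are preimage components of simple closed curves on $(\S^2,B)$, and a general $\delta\in\mathcal{C}(f^{-1}(B))$ is not of this form: its image $f(\delta)$ is typically non-simple, so no downstairs length function computes $l_\delta(\tilde f^*\tau_k)$. Moreover, preimage curves are too sparse to control the upstairs geometry: the preimage of a pants decomposition of $\S^2\setminus B$ decomposes $\S^2\setminus f^{-1}(B)$ into pieces that are degree-$\ge 2$ covers of pants, hence generally not pants, and the essential curves inside such a piece are never preimages of simple closed curves downstairs; so convergence of all downstairs lengths leaves the lengths (and twists) of those curves unconstrained by your estimates. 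Identifying the thin parts does not yield convergence of the thick-part geometry. The standard way to close this gap --- and essentially what Selinger's proof does --- is to use the equivalent characterization of convergence in $\ATeich$ by $(1+\epsilon_k)$-quasiconformal maps on complements of the thin parts, lift those maps through the fixed covering $f$ (lifting preserves dilatation and respects markings), and then treat the erasing map by a separate continuity argument for the forgetful map, including the collapsing of components that become unstable after erasure, a point your stratumwise definition also glosses over. So: right skeleton, correct identification of where the difficulty sits, but the heart of the proof is missing, and the collar-lemma sandwich alone will not supply it.
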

\begin{proof} The proof of Theorem 6.4 in \cite{Selinger} applies mutatis mutandis to our setting.
\end{proof}

We say that a triple $(f,A,B)$ and, correspondingly, the deformation space $\Def(f,A,B)$ are {\it reduced} if 
$$B=A\cup f(A)\cup V(f).$$
Below we  assume that all  
 deformation spaces $\Def(f,A,B)$ we consider are reduced. 
Throughout the article, we consider only non-exceptional branched covers (the covers to which A.Epstein's theorem \ref{thm:Adam} is applicable). 

\section{Supgroups of mapping class groups naturally associated to deformation spaces} \label{sec:groups}

\subsection{Postcritically finite branched covers}
Let $f$ be a branched cover. 
Let $\Omega_f=\{z\in \hat{\mathbb C}: \deg_z f \geq 2 \}$. The set 
$$P_f=\bigcup_{n\geq 1} \{f^{\circ n}(\Omega_f\}$$
is called the postcritical set of $f$. The branched cover is called postcritically finite if $P_f$ is a finite set. Two branched covers $f$, $g$ are combinatorially equivalent \cite{DH_Thurston} if there exist homeomorphisms $h$, $h'$ isotopic rel $ P_f$ such that 
$$\begin{CD}
(\S^2, P_f) @>h'>> (\S^2,P_g)\\
@VVfV @VVgV\\
(\S^2, P_f) @>h>> (\S^2,P_g).
\end{CD}$$

W.Thurston  \cite{DH_Thurston} have shown that if the map $f$ has hyperbolic orbifold, then it is equivalent to at most one rational map; moreover he proved a topological criterion that checks whether a given map is equivalent to a rational map.

If in the diagram above we take $f=g$, then we say that $[h]\in \Map(P_f)$ is a self-equivalence. In particular, it follows from W.Thurston's theorem \cite{DH_Thurston} that if  $f$ is equivalent to a rational map, then  the group of self-equivalences is trivial. However, if $f$ is an obstructed branched cover, the group of self-equivalences can be quite complicated and has been extensively studied \cite{BD,KSY}. In particular, it can be infinitely generated \cite{BD}.  

  
\newcommand{\R}{\mathcal R}
\newcommand{\RC}{\R_{\Rat}}
\newcommand{\piRM}{\pi_{\mathcal{RM}}}
\newcommand{\piTR}{\pi_{\mathcal{TR}}}
\newcommand{\Deck}{\operatorname{Deck}}
\subsection{Self-equivalences for partially postcritically finite maps}
In this section we define a group of self-equivalences associated with a branched cover $f:(\S^2, A)\to (\S^2, B)$ such that $$B=A\cup f(A)\cup V(f).$$ 


Suppose, for $i = 0, 1$, that $f_i\from (\mathbb{S}^2, A_i) \to (\mathbb{S}^2, B_i)$ are branched covers, with $A_i \cup V(f_i) \subset B_i$. We say that a homeomorphism $h \from (\S^2, A_0, B_0) \to (\S^2, A_1, B_1)$  is an \emph{equivalence} (or \emph{combinatorial equivalence}) if there is a $h'\from(\mathbb{S}^2, A_0) \to (\mathbb{S}^2, A_1)$ such that $h' \sim h$ rel $A_1$ and the diagram commutes 
$$\begin{CD}
(\S^2, A_1) @>h'>> (\S^2,A_2)\\
@VVf_1V @VVf_2V\\
(\S^2, B_1) @>h>> (\S^2,B_2).
\end{CD}.$$
 We observe that this is Thurston's notion of equivalence in the case where $A_i = B_i$. 

If we take $f_0=f_1=:f$, then for each combinatorial equivalence $h:(\S^2, B)\to (\S^2, B)$, we can consider $[h]\in\Map(B)$.  We obtain a group $G_f\subset \Map(B)$ that we call {\it a group of self-equivalences} of the triple $(f, A,B)$. 

\subsection{Rees component}

In this section we define spaces naturally associated to the group of self-equivalences.
First we define a Rees component $R(f)$: 

We replace $\mathbb{S}^2$ with $\chat$ in the definition of combinatorial equivalence. We do not suppose that the $f_i$ are holomorphic. We say that an equivalence $h\from (\chat, A_0, B_0) \to (\chat, A_1, B_1)$ is \emph{geometric} if it is conformal (we do not require $h'$ to be conformal). 

We define the \emph{Rees component} $\R(f)$ as the space of branched covers $g\from (\chat, A') \to (\chat, B')$ that are combinatorially equivalent to $f$, modulo geometric equivalence. The space $\R(f)$ is a connected component of the space of all branched covers with the same critical portrait as $f$, modulo the same equivalence relation.
M.Rees in \cite{Rees} considered the space of all branched covers combinatorially equivalent to given one for quadratic rational maps, and kept track of the points $A'$ and $B'$. Rees component is a deformation retract of the space of branched covers that M.Rees considered. Note that the space is naturally equipped with a complex structure.

We let $\RC(f,A,B) \subset \R(f)$ be the equivalence classes with  holomorphic representatives. 
\begin{lemma} There is a natural covering map 

\begin{equation*} \piTR\from\Teich(B) \to \R(f). \end{equation*}
The group $G_f$ can be naturally identified with $\Deck(\Teich(B)/\R(f))$.

\end{lemma}

\proof
For $\tau\in \Teich(B)$, let $\phi\from (S^2, B) \to (\chat, \phi(B))$ be a representative, then we define $\piTR(\tau)=\phi \circ f\circ \phi^{-1}$. Choosing a different representative $\phi'$ of the structure $\tau$,  we 
obtain the  map $\phi'\circ f\circ (\phi')^{-1}$ which is geometrically equivalent to $\phi \circ f\circ \phi^{-1}$; we see that the map $\piTR$ is well-defined.

The map $\piTR$ is surjective. Let $g\from (\chat, A') \to (\chat, B')$ be a branched cover combinatorially equivalent to $f$, then there exists $\phi\from(\S^2, B)\to (\chat, B')$, $\phi'\from(\S^2, A)\to (\chat, A')$ such that $g\circ\phi'=\phi\circ f$, $\phi\sim \phi'$ rel $A$. The branched cover $g$ is thus  geometrically equivalent to $\phi \circ f\circ\phi^{-1}$.   

Moreover, if $h\in\Map(S^2, B)$ is a self-equivalence of $f$, then  $\piTR(h \tau)=\piTR(\tau)$.  The map $\piTR \colon \Teich(B) \to \mathcal{R}(f)$ is a covering map. It is a Galois cover, since $\Teich(B)$ is simply connected. We can identify $G_f:=\Deck(\Teich(B)/\R(f))$ with the group of homotopy classes of self-equivalences of $(f, A, B)$. 
\qed

We say that an element $h\in \Map(B)$ is {\it liftable} if there exists $h'\in \Map(A)$ \and their representatives $\tilde{h}$ and $\tilde{h}'$ such that $f\circ \tilde{h}=\tilde{h}'\circ f$. If such an element $h'\in \Map(A)$ exists, then it is unique. Hence, $f$ acts on liftable elements, we will use the notation $f^*\colon H_f \to \Map(A)$ for the action $h \mapsto h'$. The group of liftable elements $H_f<\Map(B)$ is a finite index subgroup \cite{KPS}.  The group $G_f$ is a subgroup of $H_f$.

Note that the group of self-equivalences $G_f$ and the Rees component $\R(f)$ are well-defined even if the branched cover $f$ is not realized by a rational function. Now we assume that $f$ can be realized by a rational function, i.e. $\RC(f,A,B)$ and $\Def(f, A,B)$ are non-empty.

\begin{lemma} The map $\piTR \colon \Def(f,A,B)\to\RC(f,A,B)$ is a Galois covering map.
\end{lemma}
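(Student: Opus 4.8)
The plan is to reduce the statement to the already-established fact that $\piTR\from\Teich(B)\to\R(f)$ is a Galois covering map with deck group $G_f$. The crucial point is that the deformation space is exactly the full preimage of the rational locus, i.e.
\[
\Def(f,A,B)=\piTR^{-1}\bigl(\RC(f,A,B)\bigr).
\]
Granting this identity, the lemma follows from the general principle that the restriction of a Galois covering to the preimage of a subspace of the base is again a Galois covering onto that subspace.

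First I would establish the identity in both directions by unwinding the definition of $\piTR$. Fix $\tau\in\Teich(B)$ with representative $\phi\from(\S^2,B)\to(\chat,\phi(B))$, so that $\piTR(\tau)$ is the geometric-equivalence class of $g=\phi\circ f\circ\phi^{-1}$, a branched cover $(\chat,\phi(A))\to(\chat,\phi(B))$. By definition of $\RC$, we have $\piTR(\tau)\in\RC(f,A,B)$ exactly when this class has a rational representative, that is, when there is a M\"obius $h$ and an $h'\sim h$ rel $\phi(A)$ with $h\circ g=g_0\circ h'$ for some rational $g_0$. Setting $\psi=h\circ\phi$ and $\psi'=h'\circ\phi$, a short computation gives $g_0=\psi\circ f\circ(\psi')^{-1}$, with $\psi\sim\psi'$ rel $A$ (precompose the homotopy $h'\sim h$ with $\phi$); thus $\psi,\psi'$ realize precisely the commutative square of the introduction with $g_0$ rational. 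But the existence of such a square is equivalent to $f^*\tau=i_*\tau$, i.e. to $\tau\in\Def(f,A,B)$. Conversely, a point of $\Def$ furnishes such a square, hence a rational representative of $\piTR(\tau)$. I expect this matching of ``having a rational representative of $\piTR(\tau)$'' with the fixed-point equation $f^*\tau=i_*\tau$ to be the main obstacle, since it requires careful bookkeeping of the two marked sets $A\subset B$, of the erased points $f^{-1}(B)\setminus A$ implicit in $f^*$, and of the rel-$A$ homotopy conditions on the conjugating maps $h,h'$.

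With the identity $\Def=\piTR^{-1}(\RC)$ in hand I would conclude as follows. Since $\piTR\from\Teich(B)\to\R(f)$ is a covering map, its restriction to the full preimage $\piTR^{-1}(\RC)=\Def$ is a covering map onto $\RC(f,A,B)$, and it is surjective because $\piTR$ is surjective onto $\R(f)$. For the Galois property, recall that each $h\in G_f$ satisfies $\piTR(h\tau)=\piTR(\tau)$, so $h$ permutes the fibers of $\piTR$ and in particular preserves their union $\Def$; hence $h$ restricts to a deck transformation of $\piTR|_{\Def}$. As $G_f$ already acts transitively on each fiber of $\piTR$, it acts transitively on each fiber of $\piTR|_{\Def}$, so the restricted covering is Galois.

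Finally I would pin down the deck group. The action of $G_f$ on $\Def$ is faithful: a nontrivial deck transformation of a covering whose total space $\Teich(B)$ is connected (indeed simply connected) has no fixed points, so no $h\neq\id$ can restrict to the identity on the nonempty set $\Def$. Combined with transitivity on fibers, this shows that the group of deck transformations of $\piTR|_{\Def}\from\Def(f,A,B)\to\RC(f,A,B)$ is again identified with $G_f$, completing the proof.
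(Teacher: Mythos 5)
Your proof is correct, and it rests on the same pillar as the paper's: both restrict the Galois covering $\piTR \from \Teich(B) \to \R(f)$, with deck group $G_f$, to the deformation space. The key step differs, though. The paper proves $G_f$-invariance of $\Def(f,A,B)$ directly: starting from the rational realization $g \circ \phi' = \phi \circ f$ of a point $\tau \in \Def(f,A,B)$ and a self-equivalence $h$ with lift $h'$, it stacks the two commutative squares to conclude $h\tau \in \Def(f,A,B)$; since the ambient cover is Galois, this makes $\Def(f,A,B)$ a union of fibers. You instead prove the saturation identity $\Def(f,A,B) = \piTR^{-1}\bigl(\RC(f,A,B)\bigr)$ by unwinding what a geometric equivalence from $\phi \circ f \circ \phi^{-1}$ to a rational map means and matching it against the fixed-point equation $f^*\tau = i_*\tau$. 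Your identity is slightly stronger and buys two things the paper leaves implicit: it gives at once that $\Def(f,A,B)$ is fiber-saturated and that $\piTR|_{\Def(f,A,B)}$ is surjective onto $\RC(f,A,B)$ (the paper's closing ``hence $\RC(f,A,B)$ is an image of $\Def(f,A,B)$'' tacitly uses the converse implication, which in the paper is only available from its earlier surjectivity argument for $\piTR$ applied to classes with rational representatives). One caveat on your final paragraph: when $\Def(f,A,B)$ or $\RC(f,A,B)$ is disconnected --- which is precisely the situation of interest in this paper --- the full group of deck transformations of the restricted covering can be strictly larger than $G_f$, since it may act independently over different components; what your argument actually establishes, and what both the Galois property and the later sections require, is that $G_f$ acts on $\Def(f,A,B)$ by deck transformations, freely and transitively on each fiber.
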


\begin{proof} Let $\tau\in \Def(f,A,B)$; let $\phi:(\S^2, B)\to (\chat, \phi(B))$ be a representative of $\tau$. Then by  definition of the deformation space, there exists $\phi':(\S^2, A)\to (\chat, \phi(A))$, $\phi'\sim \phi$ rel $A$, and a rational map $g\from(\chat, \phi(A))\to(\chat, \phi(B))$ such that $g\circ \phi'=\phi \circ f$. Let $h$ be a self-equivalence and $\tau'\in \Teich(B)$ a structure defined by $\phi \circ h$. We have the following diagram:

\begin{tikzcd}
(\S^2,A)\arrow{r}{h'}\arrow{d}{f} & (S^2, A)\arrow{r}{\phi'}\arrow{d}{f} & (\chat, \phi(A))\arrow{d}{g}\\
(\S^2,B)\arrow{r}{h} & (S^2, B)\arrow{r}{\phi} & (\chat, \phi(B))\\
\end{tikzcd}

Hence, $\tau' \in \Def(f,A,B)$. So, the group $G_f$ acts on $\Def(f,A,B)$, hence $\RC(f,A,B)$ is an image of $\Def(f,A,B)$ under $\piTR$. 
\end{proof}

\begin{corollary}
$\RC(f,A,B)$ is a smooth manifold.
\end{corollary}

\proof
$\RC(f,A,B)= \piTR(\Def(f, A, B))$. 

\noindent Since restriction of $\piTR$ to $\Def(f,A,B)$ is a Galois covering, $\RC(f,A,B)$ is a smooth manifold. A priori, it can
consist of several components.
\qed

\begin{lemma} We define
 \begin{equation*}\piRM\from\R(f) \to \Mod(B)\end{equation*}
 
 where $\piRM(f',A',B'):=B'$. The map $\piRM$ is a covering and
\begin{equation*}\pi=\piRM\circ\piTR. \end{equation*}

\end{lemma}




Let $M(f,A,B)=\pi_{RM}(\RC(f,A,B))$.

\begin{lemma} The map $\pi_{RM}:\RC(f,A,B)\to M(f,A,B)$ is a finite degree covering map.
\end{lemma}
\proof
We can also consider the space $\Rat(f, A, B)$ of all maps with the same critical portrait as $f$. We define $\Rat(f, A, B)$ as the space of pairs $g\from \chat \to \chat, h\from B\to \chat$ such that $g$ is holomorphic, $g \circ h = h\circ f$ on $A$, and the local degree of $f$ at $x$ equals to the local degree of $g$ at $h(x)$ for all $x \in A$.  We have a natural embedding of $\RC(f,A,B)$ into $\Rat(f,A,B)$. 
Hence, the restriction of $\piRM$ to $\RC(f,A,B)$ is necessarily a finite degree covering map. 
\qed

If it is one-to-one, we say that the deformation space is {\it projectable}.

We summarize the notations in the following diagram:

\begin{tikzcd}[column sep=tiny]
\Teich(B) \arrow[bend right=50]{dd}[swap]{\pi} \arrow{d}{\piTR} &\supset & \Def(f,A,B) \arrow[bend left=70]{dd}{\pi} \arrow{d}{\piTR}\\
\R(f)\arrow{d}{\piRM}& \supset & \RC(f,A,B)\arrow{d}{\piRM}\\
\Mod(B) & \supset & M(f,A,B)\\
\end{tikzcd}




\subsection{Quadratic rational map} \label{sec:quad_rat}
Our main example for this article is the space of quadratic rational maps.

Let $P(z),$ $Q(z)$ be polynomials  with no common roots. The map $f(z)=\frac{P(z)}{Q(z)}$ is called a degree $d:=\max\{\deg P, \deg Q\}$ rational map.   The space of rational maps of degree $d$ is denoted by $\Rat_d.$ Specifying dynamical relations for orbits of  critical points, one obtains subvarieties of $\Rat_d$ with prescribed dynamical properties.  M\"{o}bius transformations act on the space of rational maps by conjugating. The space of conjugacy classes of rational maps of degree $2$ with marked critical points is denoted by $\M2$. J. Milnor \cite{Milnor_quad_rational} showed that $\M2$ is biholomorphic to ${\mathbb C}^2$.

Quadratic rational maps have two critical points. We denote them by $c_1$ and $c_2$. {We will use the notation $v=f(c_2)$.} Consider the variety $\Per_n\subset \M2$:
$$\Per_n:=\{f\in \M2:\, f^{  n}(c_1)=c_1 \text{ and } f^{k}(c_1)\neq c_1 \text{ when } 0 < k < n\}.$$
$\Per_n$ is an algebraic subvariety of $\M2$. It is not known whether $\Per_n$ is irreducible for all $n$.

Suppose that $c_1, f(c_1),$ and $f^2(c_1)$ are mutually distinct for a map $f\in \M2$. Conjugating by a  M\"{o}bius transformation, we may assume that $c_1=0$ and $0\mapsto \infty \mapsto 1$. With this parametrization   $f$ takes the form 
$$z\mapsto 1+\frac{b}{z}+\frac{c}{z^2}=:f_{b,c}(z).$$ 
On the other hand, any two values of $b\in \mathbb{C}$, $c\in \mathbb{C}^*$ define a rational map in $\M2$.

Let
$$\Per'_n=\{f\in \Per_n: \, v\neq f^k(c_1), k=0,1,\dots\}$$
Let $f\in \Per'_n$. Take $A=\{c_1, f(c_1) \dots, f^{ (n-1)}(c_1)\}$, $B=A\cup \{v\}$. 
Then $\Def(f, A, B)$ is $1$-dimensional. 



\begin{lemma} Let $f\in \Per'_3$. The deformation space $\Def(f,A,B)$ is not projectable. The map $\piRM|_{\RC(f)}$ is a two-to-one covering. 
\end{lemma}

\begin{proof} Recall that with our standard parametrization, a map $f\in \M2$ has the form
$$f_{b,c}(z)= 1+\frac{b}{z}+\frac{c}{z^2}.$$
If $f_{b,c}\in \Per_3$, $f(1)=1+b+c=0$, so $c=1-b$.
We compute the second critical value $v=1-\frac{b^2}{4(1-b)}$. The last equation has two solutions $b_1,b_2$ for a fixed $v$, so the maps $f_{b_1, 1-b_1}$ and $f_{b_2, 1-b_2}$ project to the same point in the moduli space.
\end{proof}

\begin{lemma} Let $f\in \Per'_n$. For $n\geq 4$, the deformation space $\Def(f,A,B)$ is projectable.
\end{lemma}

\begin{proof} Let $\rho=f(1)$, $s=f(\rho)$. Then we have the following system of linear equations:
$$1+b+c=\rho$$
$$\rho^2+b\rho+c=\rho^2 s$$

If $n\geq 4$, $s\neq \infty$, so the system of linear equations is well-defined. Its determinant is equal to $(1-\rho)\neq 0$. Therefore, the values of $\rho$ and $s$ uniquely determine the map.
\end{proof}

\section{Equalizing multicurves} \label{sec:multicurves}

\subsection{Invariant multicurves for postcritically finite maps}

Let $f$ be a postcritically finite map with postctritical set $P_f$. The map $f$ induces the pull-back map on the space of simple closed curves on $\S^2\backslash P_f$. The pull-back is a composition of the pull-back under the covering map 
$f:(\S^2, f^{-1}(P_f))\to (\S^2, P_f)$ and forgetful map $i:(\S^2, f^{-1}(P_f))\to (\S^2, P_f)$. A simple-closed curve is called non-peripheral if it is not homotopically trivial and is not homotopic to a point in $P_f$. A multicurve $\Gamma=(\gamma_1,\dots,\gamma_n)$ is a collection of disjoint simple closed non-peripheral curves $\gamma_i$.   

We say that a multicurve $\Gamma=(\gamma_1,\dots, \gamma_n)$ is invariant if for every curve $\gamma_i$ all its preimages are either peripheral or homotopic to one of $\gamma_j$. We denote the preimages of $\gamma_i$ homotopic to $\gamma_j$ by $\gamma_{i j \alpha}$, where $\alpha$ is an enumerating index. We denote by $d_{i j \alpha}=\deg f\, :\, \gamma_i\to \gamma_{i  j  \alpha}$. For each multicurve $\Gamma$ there is a Thurston's matrix $T_{\Gamma}:$
$$(T_{\Gamma})_{i, j}=\sum_{\alpha} \frac{1}{d_{i j \alpha}}.$$ 
A multicurve is called {\it an obstruction} if the largest multiplier $\lambda(T_{\Gamma})\geq 1$.

\begin{theorem} (W. Thurston \cite{DH_Thurston}) The map $f$ with the hyperbolic orbifold is equivalent to a rational function if and only if there are no obstructions. 
\end{theorem}

\subsection{Equalizing multicurves for partially postcritically finite maps}
 
In this subsection we introduce analog of invariant multicurves for partially postcriticaly finite maps.

Consider a branched cover $f:(\S^2, A)\to (\S^2, B)$ such that $$B=A\cup f(A)\cup V(f).$$ 

The forgetful map  induces the map $i_*: \mathcal{C}(B) \to \mathcal{C}(A)$ on classes of simple closed curves:
$$i_*:[\gamma]_B \mapsto [\gamma]_A.$$
Similarly, we define the action of $i_*:\mathcal{M}(B) \to \mathcal{M}(A)$.

A formal linear sum 
$$m_1\gamma_1+\dots+m_n\gamma_n,$$
where $m_i\in {\mathbb Q}$ is called a weighted multicurve. Let $\mathcal{W}(P)$ be the space of equivalence classes of weighted multicurves on $({\mathbb S}^2,P)$. 
The forgetful map  extends linearly to weighted multicurves, hence, it defines a linear map $i_*\colon \mathcal{W}(B)\to \mathcal{W}(A)$.

Let $\beta_1,\dots,\beta_k\subset ({\mathbb S}^2,f^{-1}(B))$ be preimages of a simple closed curve $\gamma$ under the map $f$; denote by $d_i$ the degree of $f$ restricted to $\beta_i$. Define

$$\tilde{f}^*(\gamma) =\sum^k_{i=1}\frac{1}{d_{i}}\beta_i.$$

We extend this map linearly to $\tilde{f}^*: \mathcal{W}(B)\to \mathcal{W}(f^{-1}(B))$. 
Let
$$f^*:=\tilde{i}_* \circ \tilde{f}^*,$$ 
where $\tilde{i}_*$ is the forgetful map $\tilde{i}_*:\mathcal{W}(f^{-1}(B))\to \mathcal{W}(A)$. 

We also get a well-defined map
$$f^{*}:\mathcal{M}(P)\to \mathcal{M}(P),$$

if we ignore the weights in the definition of $f^*$.

To a pair of multicurves $\Gamma=\{\gamma_i\}$ on $({\mathbb S}^2, B)$ and $\Delta=\{\delta_j\}$ on $({\mathbb S}^2,A)$,  we associate the Thurston matrix $T_{\Gamma}^{\Delta}$ which 
is obtained from the map $f^*: \mathcal{W}(B)\to \mathcal{W}(A)$ by restricting the domain to $\mathbb{Q}^\Gamma$ and projecting the result to $\mathbb{Q}^\Delta$. 
The $(i, j)$-th entry of $T_{\Gamma}^{\Delta}$ equals to $\sum \frac{1}{d_{i,j,\alpha}}$, where $d_{i,j,\alpha}$ is the degree of $f$ restricted to $\delta_{j,\alpha}$ and $\{\delta_{j,\alpha}\}$ is the set of connected components of   $f^{-1}(\gamma_i)$ that are homotopic to $\delta_j$. In a similar fashion we define the restriction of $i_*$, the matrix $I_{\Gamma}^{\Delta}$, where  $(i, j)$-th entry is $1$ if $[\gamma_i]_A=[\delta_j]_A$ and $0$ otherwise.
Note that the matrices depend only on the equivalence classes of multicurves $\Gamma$ and $\Delta$.

We say that a multicurve $\Gamma$ is {\it equalizing}  if $[f^*\Gamma]=[i_*\Gamma]$ as multicurves.
A multicurve $\Gamma=\{\gamma_1,\ldots, \gamma_n\}$ is {\it arithmetically equalizing} if there exists a set of positive  rational weights 
$\{m_1,\ldots, m_n\}$ such that $i_*[m_1\gamma_1\dots+m_n\gamma_n]_B=f^*[m_1\gamma_1+\dots+m_n\gamma_n]_B$. 


\begin{theorem}\label{te:accumulation} $f^*\mathcal{S}_{\Gamma}=i_*\mathcal{S}_{\Gamma}$ if and only if $\Gamma$ is an equalizing multicurve. If the deformation space accumulates on $\mathcal{S}_{\Gamma}$, then  there exists $\tilde{\Gamma}\subset \Gamma$ such that $\tilde{\Gamma}$ is an arithmetically equalizing multicurve.
\end{theorem}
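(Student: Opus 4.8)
The plan is to prove the two assertions by separate methods: the equivalence in the first sentence is combinatorial and follows from tracking pinched curves under the extended maps, while the second sentence is an analytic statement that I would extract from the asymptotics of annulus moduli near the stratum.

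\emph{First assertion.} By Theorem~\ref{te:aug_teich} both $i_*$ and $f^*$ extend continuously to $\ATeich$, and I would show that they carry the open stratum $\mathcal S_\Gamma$ onto the strata $\mathcal S_{i_*\Gamma}$ and $\mathcal S_{f^*\Gamma}$ of $\ATeich(A)$, respectively. For a point $\tau\in\mathcal S_\Gamma$ the curves of $\Gamma$ have zero length; erasing the points of $B\setminus A$ keeps a node exactly when its class survives in $A$, so the pinched set of $i_*\tau$ is $i_*\Gamma$, while pulling a pinched annulus back by $f$ produces a pinched annulus along every component of $f^{-1}(\Gamma)$, so after erasing $f^{-1}(B)\setminus A$ the pinched set of $f^*\tau$ is $f^*\Gamma$. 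Surjectivity onto each stratum comes from the fibration structure (for $i_*$ one is free to place the erased points, for $f^*$ one lifts a maximally degenerate configuration). Hence $f^*\mathcal S_\Gamma$ and $i_*\mathcal S_\Gamma$ are the single strata $\mathcal S_{f^*\Gamma}$ and $\mathcal S_{i_*\Gamma}$, and they coincide precisely when $f^*\Gamma=i_*\Gamma$ in $\mathcal M(A)$, that is, when $\Gamma$ is equalizing.

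\emph{Second assertion.} Choose $\tau_k\in\Def(f,A,B)$ with $\tau_k\to\tau_\infty\in\mathcal S_\Gamma$, and let $M_i(\tau)$ denote the modulus of the widest annulus homotopic to $\gamma_i$; the hypothesis forces $M_i(\tau_k)\to\infty$. The core of the argument is to compute the moduli of the short curves $\delta_j$ of $A$ at the common image $\sigma_k:=f^*\tau_k=i_*\tau_k$ in two ways. Viewing $\sigma_k=i_*\tau_k$, the annuli about those $\gamma_i$ with $[\gamma_i]_A=[\delta_j]_A$ become parallel and amalgamate, which by super-additivity of moduli (Gr\"otzsch) gives $\operatorname{mod}_{\sigma_k}(\delta_j)=\sum_i (I_\Gamma^\Delta)_{ji}\,M_i(\tau_k)+O(1)$. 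Viewing $\sigma_k=f^*\tau_k$, each $\delta_j$ is covered by components of $f^{-1}(\gamma_i)$ of degrees $d_{i,j,\alpha}$, and the covering estimate $\operatorname{mod}\ge d^{-1}\operatorname{mod}$ together with the same amalgamation gives $\operatorname{mod}_{\sigma_k}(\delta_j)=\sum_i (T_\Gamma^\Delta)_{ji}\,M_i(\tau_k)+O(1)$. Dividing by $\max_l M_l(\tau_k)$ and passing to a subsequence, the normalized vectors converge to some $\mathbf m=(m_i)$ with entries in $[0,1]$ and $\max_i m_i=1$; the bounded errors disappear in the limit, leaving $I_\Gamma^\Delta\mathbf m=T_\Gamma^\Delta\mathbf m$. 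Setting $\tilde\Gamma=\{\gamma_i: m_i>0\}$, this reads $i_*[\sum_{\gamma_i\in\tilde\Gamma} m_i\gamma_i]_B=f^*[\sum_{\gamma_i\in\tilde\Gamma} m_i\gamma_i]_B$; since $I_\Gamma^\Delta$ and $T_\Gamma^\Delta$ are rational, the positive real solution $\mathbf m$ may be replaced by a positive rational one, so $\tilde\Gamma$ is arithmetically equalizing.

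The main obstacle is to establish the two modulus formulas with a \emph{uniform} $O(1)$ error. The lower bounds are routine, coming from explicit sub-annuli (amalgamation of parallel annuli for $i_*$, and the Gr\"otzsch covering inequality for $f^*$). The delicate direction is the matching upper bound: one must rule out extra modulus accumulating along $\delta_j$ beyond the listed contributions, equivalently that $\sigma_k$ develops no short curves other than those forced by $\Gamma$. I expect to control this by comparing the thick--thin decompositions of $\sigma_k$ obtained from the two descriptions and using the continuity of $f^*$ (Theorem~\ref{te:aug_teich}) to confine $\sigma_k$ to a fixed neighborhood of the limiting stratum, so that all complementary pieces keep bounded geometry and contribute only the uniform $O(1)$ terms.
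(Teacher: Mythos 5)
Your proposal follows essentially the same route as the paper: the first assertion via the continuous extensions of $f^*$ and $i_*$ to the augmented Teichm\"uller space with $f^*\mathcal{S}_\Gamma=\mathcal{S}_{f^*\Gamma}$ and $i_*\mathcal{S}_\Gamma=\mathcal{S}_{i_*\Gamma}$, and the second via comparing the two modulus expansions $T_\Gamma^\Delta m^k+O(1)$ and $I_\Gamma^\Delta m^k+O(1)$ at the common image and extracting a non-negative rational solution of $(T_\Gamma^\Delta-I_\Gamma^\Delta)m=0$. The ``main obstacle'' you flag --- the uniform $O(1)$ error in the modulus estimates --- is exactly what the paper dispatches by citing Theorem~7.1 of Douady--Hubbard \cite{DH}, so your thick--thin sketch is replaced there by a standard reference rather than a new argument.
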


\begin{proof}  By Theorem \ref{te:aug_teich}, the pullback map  $f^*$ extends continuously to the augmented Teichm\"{u}ller space and $f^*(S_{\Gamma})=S_{f^*(\Gamma)}$. The forgetful map  extends continuously to the augmented Teichm\"{u}ller space as well. Hence, $i_*(S_{\Gamma})=S_{i_*(\Gamma)}$ and the first statement follows.

Let $\tau^k\in \Def(f,A,B)$ be a sequence of conformal structures such that $\tau^k\to \tau^*$, $\tau^*\in \mathcal{S}_{\Gamma}$. Let $m^k_i$ be the modulus of the maximal embedded annulus homotopic to $\gamma_i$ in the Riemann sphere corresponding to $\tau^k$. 
Set $\Delta= f^{-1}(\Gamma)$; Let $n^k_j$ be the modulus of the maximal embedded annulus homotopic to $\delta_j$ in the Riemann sphere corresponding to $f^*(\tau^k)$. Applying Theorem~7.1 from \cite{DH_Thurston}, we see that $n^k=T_{\Gamma}^{\Delta}m^k +O(1)$ where the additive constant depends only on $f$ and the supremum of conformal moduli of curves not in $\Gamma$. Similarly, we get $n^k=I_{\Gamma}^{\Delta}m^k +O(1)$ because $f^*(\tau)=i_*(\tau)$ by definition of  $\Def(f,A,B)$. Hence, $(T_{\Gamma}^{\Delta}-I_{\Gamma}^{\Delta})m^k=O(1)$. Since all entries of  $m^k$ tend to infinity, we see that there is a non-negative rational solution to $(T_{\Gamma}^{\Delta}-I_{\Gamma}^{\Delta})m=0$.
\end{proof}
This raises the question of which arithmetically equalizing curves give rise to accumulation points of a given deformation spaces. In Section \ref{sec:vanishing} we give the answer for arithmetically equalizing curves such that $[f^*\Gamma]=[i_*\Gamma]=\emptyset$.

\subsection{Elements of the mapping class group associated to equalizing multicurves.}

For a simple curve $\gamma$, we denote by $T_{\gamma}$ a Dehn twist along $\gamma$. 
\begin{lemma} Suppose that $m$ is such that, for every curve $\beta$ that maps to $\gamma$ by $f$, the degree of $f$ on $\beta$ divides $m$. Then 
$f^*(m \gamma) = \sum n_i \beta_i$, where the $n_i$ are integers, and $f^*(T_\gamma^m)=  \prod T_{\beta_i}^{n_i}$.  
\end{lemma}

\begin{proof} The map $T_{\gamma}$ is identity outside of an annulus $A_{\gamma}$ around the curve $\gamma$. Hence, we only need to check that it lifts in $A_{\gamma}$. The fact that it lifts in $A_{\gamma}$ is guaranted by the
condition on $m$
\end{proof}

\begin{note} Note that such $n$ always exist. For example, the map $T_\gamma^{d!}$ is always liftable, for every $\gamma$, where $d$ is the degree of $f$. 
\end{note}

Let $\Gamma=\{\gamma_1, \dots, \gamma_n\}$ be a multicurve. Let $\Delta=f^*(\Gamma)$


\begin{lemma}\label{lem:Dehn_twist} Let $\Gamma=\{\gamma_i\}$ be an arithmetically equalizing multicurve. Assume $f^*(m_1\gamma_1+\dots+m_n\gamma_n)=i_*(m_1\gamma_1+\dots m_n\gamma_n)$, where $m_i\in \Z$. Assume that $T^{m_k}_{\gamma_k}$ are liftable. Then $T^{m_1}_{\gamma_1}\circ \dots \circ T^{m_n}_{\gamma_n}\in G_{f}$. 
\end{lemma}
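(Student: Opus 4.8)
The plan is to reduce membership in $G_f$ to an identity of mapping classes and then to an already-established identity of weighted multicurves. The starting point is the characterization
\[
G_f = \{\, h \in H_f : f^*(h) = i_*(h) \text{ in } \Map(A) \,\},
\]
which is just an unwinding of the definitions: for liftable $h$ the lift $h' = f^*(h)$ exists and is unique, and $h$ is a self-equivalence precisely when $h' \sim h$ rel $A$, i.e. when $f^*(h)$ and $i_*(h)$ agree in $\Map(A)$. So, writing $h := T_{\gamma_1}^{m_1}\circ\dots\circ T_{\gamma_n}^{m_n}$, it suffices to prove that $h$ is liftable and that $f^*(h) = i_*(h)$.

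First I would check $h \in H_f$. Each factor $T_{\gamma_i}^{m_i}$ is liftable by hypothesis, and the liftable elements form the subgroup $H_f < \Map(B)$, so the product lies in $H_f$. Because the lift of a mapping class through $f$ is unique, the assignment $f^*\from H_f \to \Map(A)$ is a group homomorphism, whence $f^*(h) = \prod_i f^*(T_{\gamma_i}^{m_i})$. Since the $\gamma_i$ are disjoint and therefore so are all of their $f$-preimages, every Dehn twist appearing below commutes with every other, and the order of the factors is immaterial.

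Next I would compute both sides as products of Dehn twists about one common disjoint family and compare their \emph{twisting vectors}. By the displayed formula preceding the lemma, liftability of $T_{\gamma_i}^{m_i}$ means the degree $d_\beta$ of $f$ along each preimage $\beta$ of $\gamma_i$ divides $m_i$, and
\[
f^*\bigl(T_{\gamma_i}^{m_i}\bigr) = \prod_\beta T_\beta^{\,m_i/d_\beta},
\]
the mapping class whose twisting data is exactly the \emph{integral} weighted multicurve $f^*(m_i\gamma_i) \in \mathcal W(A)$; twists about preimages that become peripheral or trivial after erasing $f^{-1}(B)\setminus A$ drop out, just as $\tilde i_*$ discards them. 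Multiplying over $i$ and collecting exponents (legitimate by the commutation noted above), $f^*(h)$ is the product of twists with twisting vector $\sum_i f^*(m_i\gamma_i) = f^*(m_1\gamma_1 + \dots + m_n\gamma_n)$. In the same way $i_*(h) = \prod_i T_{i_*\gamma_i}^{m_i}$ has twisting vector $i_*(m_1\gamma_1 + \dots + m_n\gamma_n)$. The hypothesis of the lemma asserts that these two elements of $\mathcal W(A)$ coincide.

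It remains to pass from equal twisting vectors back to equal mapping classes, and this is the step I expect to require the most care. Both $f^*(h)$ and $i_*(h)$ are products of Dehn twists about the single disjoint family of essential curves underlying the common weighted multicurve, with integer exponents equal to its weights. Since the Dehn twists about the components of a multicurve generate a free abelian group — so that the twisting vector determines the mapping class — the two products are equal in $\Map(A)$, giving $f^*(h) = i_*(h)$ and hence $h \in G_f$. The delicate points are precisely the bookkeeping of the forgetful map $\tilde i_*$ (one must confirm that after erasing $f^{-1}(B)\setminus A$ the surviving preimage curves genuinely form a multicurve, so the relevant twist subgroup is free abelian and matching weights forces matching classes) and the use of disjointness of $\Gamma$ and of its total preimage to justify the commutation and exponent-addition throughout.
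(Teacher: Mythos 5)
Your proof is correct and follows essentially the same route as the paper's: both reduce membership in $G_f$ to the identity $f^*(h)=i_*(h)$ for the liftable class $h=T_{\gamma_1}^{m_1}\circ\dots\circ T_{\gamma_n}^{m_n}$, and verify it by writing $f^*(h)$ and $i_*(h)$ as products of Dehn twists encoded by the weighted multicurves $f^*\bigl(\sum_i m_i\gamma_i\bigr)$ and $i_*\bigl(\sum_i m_i\gamma_i\bigr)$, which coincide by hypothesis. The paper's one-line proof merely leaves implicit the bookkeeping you spell out (the homomorphism property of $f^*$ on $H_f$, the vanishing of twists about curves made inessential by the forgetful map, and the free-abelian twist-subgroup argument), so your write-up is a fuller version of the same argument.
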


\begin{proof} Let $i_*(m_1\gamma_1+\dots m_n\gamma_n)=l_1\delta_1+\dots + l_k\delta_k$, where $\delta_i$ are different simple non-peripheral curves in $({\mathbb S}^2,A)$. 
Then $i_*\left(T^{m_1}_{\gamma_1} \circ \dots \circ T^{m_n}_{\gamma_n}\right)=f^*\left(T^{m_1}_{\gamma_1} \circ \dots \circ T^{m_n}_{\gamma_n}\right) = 
T^{l_1}_{\delta_1}\circ \dots\circ T^{l_k}_{\delta_k}$. 
\end{proof}

\begin{corollary} If $\Def(f,A,B)$  accumulates to a point $p\in \mathcal{S}_{\Gamma}$, $\Gamma=\{\gamma_1,\dots, \gamma_n\}$, then there exist $m_1,\dots, m_n\in \Z$ such that $T^{m_1}_{\gamma_n}
\circ \dots \circ T^{m_n}_{\gamma_n}\in G_f$.
\end{corollary}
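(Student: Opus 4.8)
The plan is to feed the accumulation hypothesis through Theorem~\ref{te:accumulation} to extract an arithmetically equalizing sub-multicurve, and then to feed the resulting equalizing relation through Lemma~\ref{lem:Dehn_twist} to produce an element of $G_f$. The only thing standing between these two results is that Theorem~\ref{te:accumulation} delivers \emph{positive rational} weights while Lemma~\ref{lem:Dehn_twist} demands \emph{integer} weights whose associated Dehn twists are liftable; bridging this gap is a matter of clearing denominators, and is effortless because $f^*$ and $i_*$ are linear on weighted multicurves, so the equalizing relation is homogeneous of degree one.

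First I would apply Theorem~\ref{te:accumulation}. Since $\Def(f,A,B)$ accumulates on $\mathcal{S}_\Gamma$, there is a sub-multicurve $\tilde\Gamma\subseteq\Gamma$ that is arithmetically equalizing; that is, there are positive rationals $q_i$, indexed by the curves $\gamma_i\in\tilde\Gamma$, with $i_*\bigl(\sum_{\gamma_i\in\tilde\Gamma}q_i\gamma_i\bigr)=f^*\bigl(\sum_{\gamma_i\in\tilde\Gamma}q_i\gamma_i\bigr)$.

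Next I would pass to liftable integer weights. Recall from the discussion preceding Lemma~\ref{lem:Dehn_twist} that $T_\gamma^{d!}$ is liftable for every curve $\gamma$, where $d=\deg f$. Let $D$ be a common denominator of the $q_i$ and set $N=d!\,D$, so that each $m_i:=N q_i$ is a positive integer divisible by $d!$. By homogeneity the equalizing relation is preserved, $i_*\bigl(\sum_{\gamma_i\in\tilde\Gamma}m_i\gamma_i\bigr)=f^*\bigl(\sum_{\gamma_i\in\tilde\Gamma}m_i\gamma_i\bigr)$, and because each $m_i$ is a multiple of $d!$, every $T_{\gamma_i}^{m_i}$ is liftable. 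Applying Lemma~\ref{lem:Dehn_twist} to the arithmetically equalizing multicurve $\tilde\Gamma$ with these weights yields $\prod_{\gamma_i\in\tilde\Gamma}T_{\gamma_i}^{m_i}\in G_f$.

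Finally I would return to the full multicurve $\Gamma$ by setting $m_i=0$ for every $\gamma_i\in\Gamma\setminus\tilde\Gamma$. Since $T_{\gamma_i}^{0}=\id$, the product $T_{\gamma_1}^{m_1}\circ\cdots\circ T_{\gamma_n}^{m_n}$ over all of $\Gamma$ coincides with $\prod_{\gamma_i\in\tilde\Gamma}T_{\gamma_i}^{m_i}$, and hence lies in $G_f$, which is the asserted conclusion. I do not anticipate a genuine obstacle: the mathematical content is already carried by Theorem~\ref{te:accumulation} and Lemma~\ref{lem:Dehn_twist}, and the corollary is essentially bookkeeping. The only points needing a moment's care are the simultaneous clearing of denominators and enforcement of $d!$-divisibility, and the observation that an equalizing relation supported on a sub-multicurve extends to $\Gamma$ by padding with zero weights---both immediate from linearity.
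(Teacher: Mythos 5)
Your proposal is correct and follows exactly the route of the paper, whose own proof simply states that the corollary is a direct consequence of Theorem~\ref{te:accumulation} and Lemma~\ref{lem:Dehn_twist}. The bookkeeping you supply---clearing denominators, forcing $d!$-divisibility so the individual twists $T_{\gamma_i}^{m_i}$ are liftable, and padding $\Gamma\setminus\tilde\Gamma$ with zero exponents---is precisely what the paper leaves implicit.
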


\begin{proof} The corollary is a direct consequence of Theorem \ref{te:accumulation} and Lemma \ref{lem:Dehn_twist}
\end{proof}

\section{Period $4$ curve} \label{sec:per4}

Below we give a detailed description of $\Per_4\subset \M2$ (compare  \cite{Milnor_quad_rational}). 

Using our standard parametrization, the every map in $\Per_4$ takes the form 
$$f_{b,c}(z):=z\mapsto 1+\frac{b}{z}+\frac{c}{z^2}.$$ 

Let $\rho=f(1)$, then we have the following critical cycle: $0\to \infty\to 1\to \rho\to 0$. 

$$1+b+c=\rho$$ 
$$\rho^2+b\rho+c=0$$

$$
b=\frac{-\rho^2-\rho+1}{\rho-1}, \quad c=\frac{2\rho^2-\rho}{\rho-1}
$$

Hence, the elements of $\Per_4$ are parametrized by $\rho\in \hat{\mathbb C}\backslash\{0,1,\infty, \frac12\}$. We will denote by $f_{\rho}$ the unique map in $\Per_4$ with $f_{\rho}(1)=\rho$. The second critical point of $f_\rho$ is $$z_{\rho}=-\frac{2c}{b}=2\frac{2\rho^2-\rho}{-\rho^2-\rho-1}.$$ The corresponding critical value of $f_\rho$ is
$$v_{\rho}=1-\frac{b^2}{4c}.$$ 

$$v_{\rho}=1-\frac{(\rho^2+\rho-1)^2}{4(\rho-1)(2\rho^2-\rho)}.$$

Fix a map $f_{\rho}\subset \Per_4$ such that $v_{\rho}\neq 0,1,\rho,\infty$. 
Let                                                         
$\Per'_4\subset \Per_4$,
$$\Per'_4:=\{f_{\rho}\in \Per_4:\, v_{\rho}\neq 0,1,\rho\}.$$
$\Per_4\setminus \Per'_4$ {consists of 6 points}: 2 solutions of the equation $v_{\rho}=0$ ($\rho_0^1=\frac{3-\sqrt{5}}{2}$, $\rho_0^2=\frac{3+\sqrt{5}}{2}$), 2 solutions of the equation $v_{\rho}=1$ ($\rho_1^1=\frac{-1-\sqrt{5}}{2}$, $\rho_1^2=\frac{-1+\sqrt{5}}{2}$), 2 solutions of the equation $v_{\rho}=\rho$ ($\rho_{\rho}^1=\frac{3-i\sqrt{3}}{6}$, $\rho_{\rho}^2=\frac{3+i\sqrt{3}}{6}$).

The map $I\from\Per'_4\to \Mod(B)$
$$I(f_{\rho})=(0,\infty,1,\rho,v_{\rho})$$ 
is an embedding, thus, $\Per'_4$ is a submanifold of $\Mod(B)$.

In this section we study the deformation space  $\Def(f_{\rho},A,B)$, $A=(0,1,\infty,\rho)$, $B=(0,1,\infty,\rho, v_{\rho})$ for an arbitrary $f_\rho \in \Per'_4$.

\subsection{Examples of  of Equalizing multicurves}. 

\begin{enumerate}
\item  Consider a curve $\delta$ that separates $v$  and $\rho$  from the rest of the points. All such curves are arithmetically equalizing, since $f^*(\delta)=i_*\delta=\emptyset$ (See figure \ref{fig:1}) 

\begin{figure}[h!]
\psfrag{0}{$0$}
\psfrag{r}{$\rho$}
\psfrag{i}{$\infty$}\psfrag{1}{$1$}\psfrag{v}{$v$}
\psfrag{g}{$\delta$}
\includegraphics[height=4.5cm]{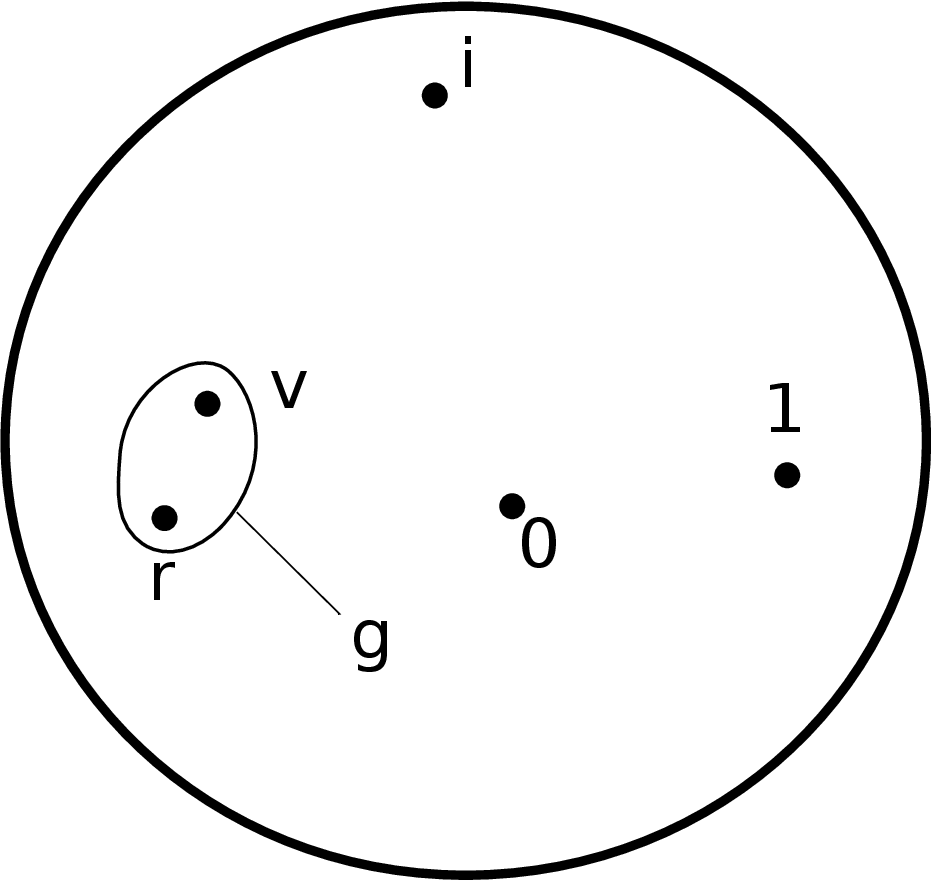}
\caption{Equalizing curve $\delta$}\label{fig:1}
\end{figure}

\item The curve $\gamma$ separating $0,1$ from $v, \rho, \infty$: 
 \begin{lemma} For $\rho\in {\mathbb R}_+$, let $\gamma_1=\{\sqrt{\rho}e^{i\theta}\colon \theta\in [0,2\pi] \}$. Then, for big enough $\rho$, $\gamma=f_{\rho}(\gamma_1)$ is an arithmetically equalizing 
curve  and it separates $0,1$ from $v,\rho,\infty$. 
\label{l:AEC}
\end{lemma}

\begin{proof} 
Recall that
$$f_{\rho}(z)=1+\frac{b}{z}+\frac{c}{z^2},$$ 
$b=-\rho+O(1)$, $c=2\rho+O(1)$.
Therefore 
$f_{\rho}(\sqrt{\rho} e^{i\theta})=-\sqrt{\rho}e^{-i\theta}+O(1)$. 
Since $v_{\rho}=-\frac{\rho}{8}+O(1)$ as $\rho\to \infty$,  $\gamma$ lies in a collar around $\beta$ that does not contain $0,1,\infty,\rho, v_{\rho}$. We see that $\gamma_1$ and $\gamma$ are homotopic relative to $B$.

Let   $\gamma_2$ be the second connected component of the preimage of $\gamma$; then $\gamma_2$ bounds unmarked preimages of $0$ and $1$, so it is homotopic to a point in 
$\hat{\mathbb C}\backslash B$. Hence $\gamma$ is an arithmetically equalizing curve.
\end{proof}

\end{enumerate}







Let $\delta$ be a simple closed curve that encircles $v$ and $\rho$ and does not intersect $\gamma$. Even though $\delta$ is an arithmetically equalizing curve, the deformation space $\Def(f,A,B)$ does not accumulate on $\mathcal{S}_{\delta}$.

\begin{lemma} \label{lem:defnotacc} The deformation space $\Def(f,A,B)$ does not accumulate on $\mathcal{S}_{\delta}$.

\end{lemma}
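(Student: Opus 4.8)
The plan is to argue by contradiction, first pinning down the only degeneration compatible with accumulation and then ruling it out.

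Suppose $\Def(f,A,B)$ accumulated on $\mathcal{S}_\delta$, say $\tau^k\to\tau^*\in\mathcal{S}_\delta$ with $\tau^k\in\Def(f,A,B)$. Writing $\tau^k$ as the marked spheres $(\chat;0,1,\infty,\rho_k,v_k)$ of the maps $f_{\rho_k}$, accumulation forces the annulus homotopic to $\delta$ to have modulus tending to infinity, i.e. the two points enclosed by $\delta$ must collide: $\rho_k,v_k\to w$ for a common $w\in\chat\setminus\{0,1,\infty\}$. Since along the deformation space $v=v_\rho$ is determined by $\rho$, continuity gives $v_w=w$. A direct computation from the explicit formula yields
$$v_\rho-\rho=-\frac{(3\rho^2-3\rho+1)^2}{4(\rho-1)(2\rho^2-\rho)},$$
so $v_w=w$ forces $3w^2-3w+1=0$, that is $w=w_\pm=\tfrac12\pm\tfrac{i}{2\sqrt3}$. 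Thus the only candidate limits come from $\rho_k\to w_\pm$.

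At these parameters a second collision occurs on the source side: the free critical point satisfies $z_\rho-1=\dfrac{3\rho^2-3\rho+1}{\rho^2+\rho-1}$, so $z_{\rho_k}\to 1$ as $\rho_k\to w_\pm$, and in the limit \emph{both} critical points of $f$ become periodic on the cycle $0\to\infty\to1\to\rho\to0$. I would then extract the contradiction from this degeneration as follows. The limit $\tau^*$ is a noded sphere with components $X_1\supset\{0,1,\infty\}$ and $X_2\supset\{\rho,v\}$ joined at a single node, and by Theorem~\ref{te:aug_teich} the relation $f^*\tau^k=i_*\tau^k$ passes to the limit, so $\tau^*$ would have to be realized compatibly with the nodal structure. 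But the cycle carries $0\mapsto\infty$ and $\infty\mapsto1$ within $X_1$, while $1\mapsto\rho$ lands in $X_2$; no realization of $f$ can send the single component $X_1$ into two distinct components. This is the incompatibility I would aim to turn into the desired contradiction. The analogous bookkeeping for the curve $\gamma$ separating $\{0,1\}$ from $\{\infty,\rho,v\}$ is by contrast consistent—there $f$ simply interchanges the two components—so the obstruction is genuinely special to $\delta$, which is exactly why $\Def$ may accumulate on $\mathcal{S}_\gamma$ but not on $\mathcal{S}_\delta$.

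The step I expect to be the main obstacle is precisely this last one: making rigorous, \emph{in the presence of the peripheral preimage} $\delta_1=f^{-1}(\delta)$, that accumulation on $\mathcal{S}_\delta$ really does produce a realization of $f$ that respects the node, rather than merely the formal identity $f^*\tau^*=i_*\tau^*$ (which, because $\delta$ is arithmetically equalizing, is by itself consistent and yields no obstruction—this is the whole point of the example). Here I would have to use Selinger's description of the boundary behaviour of the pull-back together with the critical collision $z_{\rho_k}\to 1$ to show that the limiting data cannot be assembled into an admissible map of the noded sphere, i.e. that the candidate limits at $w_\pm$ are not actually approached from within $\Def(f,A,B)$. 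This is the delicate point, since the naive modulus comparison between the annulus around $\delta$ and its $f$-preimage around $\delta_1$ is compatible (the branching at $z_\rho$ accounts for the change of rate as $\rho_k\to w_\pm$), so the argument must go beyond the linear Thurston-matrix estimate used in Theorem~\ref{te:accumulation}.
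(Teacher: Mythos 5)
Your computations are correct as far as they go: indeed $v_\rho-\rho=-(3\rho^2-3\rho+1)^2/\bigl(4(\rho-1)(2\rho^2-\rho)\bigr)$, so the only candidate limits in moduli space are the roots $w_\pm$ of $3w^2-3w+1$, and there $z_{w_\pm}=1$. But this only narrows down where an accumulating sequence could project; it produces no contradiction, and the step where you try to extract one is a genuine gap. The ``incompatibility'' you propose --- that a realization of $f$ would have to send the single component $X_1$ into two distinct components --- is not an obstruction at all. Since $[\delta]_A$ and $[\delta_1]_A$ are both peripheral, both $i_*$ and $f^*$ collapse the bubble and its preimage: they map $\mathcal{S}_\delta$ into the \emph{smooth} stratum $\Teich(A)\subset\ATeich(A)$, so the limiting equation $f^*\tau^*=i_*\tau^*$ involves no nodal structure whatsoever, and there is nothing for the dynamics to be incompatible with. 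You concede exactly this in your final paragraph (``the formal identity \dots is by itself consistent'') and defer the real work to an argument you do not supply. Note also that nothing can be squeezed out of the collision $z_{\rho_k}\to 1$ by itself: the limits $f_{w_\pm}$ are honest rational postcritically finite maps with both critical points on a $4$-cycle, so at the level of rational maps and of moduli space nothing degenerates inconsistently; the obstruction lives at the level of markings, i.e.\ of Thurston classes.

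The missing idea --- which is the paper's proof --- is dynamical rather than combinatorial. Precisely because both sides of $f^*=i_*$ land in $\Teich(A)$ when restricted to $\mathcal{S}_\delta$, the equation there becomes a Thurston fixed-point equation $g^*\sigma=\sigma$, where $g$ is the postcritically finite degree-$2$ branched cover obtained from $f$ by pinching $\delta_1$ in the domain and $\delta$ in the range; its postcritical set is $A$, with both critical points on the cycle $0\to\infty\to1\to\rho\to0$. The curve $\gamma$ separating $\{0,1\}$ from $\{\infty,\rho,v\}$ --- which you mention only to set aside as ``consistent bookkeeping'' --- is exactly what kills $g$: one component of $f^{-1}(\gamma)$ is homotopic to $\gamma$ rel $A$ and maps to it by degree $1$, so $\gamma$ descends to a Levy cycle for $g$, hence $g$ is obstructed, and Thurston's theorem says $g^*$ has no fixed point. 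Combined with the continuity of $f^*$ and $i_*$ on the augmented Teichm\"uller space (Theorem~\ref{te:aug_teich}), this rules out any accumulation point in $\mathcal{S}_\delta$. Incidentally, this also explains why your observation about $f_{w_\pm}$ cannot be turned into a contradiction: $g$ and $f_{w_\pm}$ share a critical portrait but are not Thurston equivalent. Your explicit identification of the boundary points $w_\pm$ is a nice complement, but without the pinched map and the Levy cycle the proof is not complete.
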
 
\begin{proof}
Note that $\delta$ splits the sphere into two components, one with three marked points and a node and one with two marked points and a node. Since there is a unique conformal structure on the latter sphere $\mathcal{S}_\delta \cong \Teich(A)$. On the other hand, since $\delta_1$ is peripheral $\mathcal{S}_{\delta_1}=\mathcal{S}_\emptyset=\Teich(A)$. We see that $i_*$ restricts to ${\id}_{\Teich(A)}$ on $\mathcal{S}_\delta$.

Similarly, we see that the action of $f^*$ on $\mathcal{S}_\delta$ restricts to the Thurston pull-back map $g^*\colon \Teich(A) \to \Teich(A)$ where $g$ is a postcritically finite branched cover obtained from $f$ by pinching $\delta_1$ in the domain and $\delta$ in the range. The map $g$ is obstructed: $\gamma$ descends to a Levy cycle for $g$.

By continuity, any accumulation point $\tau$ of  $\Def(f,A,B)$ satisfies $f^*(\tau)=i_*(\tau)$. Thus, any such accumulation on $\mathcal{S}_\delta$ would be a fixed point of $g^*$. Thurston's Theorem \cite{DH_Thurston} guarantees that there are no fixed points for obstructed branched covers.
\end{proof}

\section{Vanishing arithmetically equalizing curves and postcritically finite maps} \label{sec:vanishing}
Let $n$ be the dimension of the deformation space $\Def(f,A,B)$. Let $\Gamma=\{\gamma_1, \dots, \gamma_n\}$ be a mulicurve on $(\S^2, B)$ consisting of exactly $n$ curves. We denote by $f^*_{\Gamma}$ the extension of $f^*$ to the stratum $S_{\Gamma}$.
Assume that $f^*\Gamma=i_*\Gamma=\emptyset$.

\begin{lemma}   The  surface obtained by pinching the multicurve $\Gamma$ on $(\S^2, B)$ has only one component with more than three special points.
\end{lemma}

\begin{proof}

Since $i_*\Gamma=\emptyset$, every curve $\gamma_i$ in $\Gamma$ has at most one point of $A$ in one of its complementary components $U_i$; for $\gamma_i$ to be non-peripheral in $\S^2, B$, the component $U_i$ must contain at least one point of $B \setminus A$. If all $U_i$ are mutually disjoint, then each of them contains exactly one point of $B \setminus A$ because   $\# B \setminus A=n$. If any of $U_i$ are nested, the annulus between adjacent $\gamma_i$ will contain exactly one point of  $B \setminus A$.
\end{proof}

We denote the component with $3$ punctures from the previous lemma by $R_{\Gamma}$. 

\begin{corollary} The map $f:(\S^2, B)\to (\S^2, B)$ induces a postcritically finite map $f_{\Gamma}:R_{\Gamma}\to R_{\Gamma}$.  
\end{corollary}
Note that the image of $i_{*}(S_{\Gamma})$ is in $\Teich(\S^2,A)$ and the map is one-to-one. Hence the map $i_{*}^{-1}\cdot f^*:S_{\Gamma}\to S_{\Gamma}$ is well-defined and is the Thurston pullback map for the map $f_{\Gamma}$.

\begin{theorem} \label{thm:pcf} Assume $\Gamma$ is an equalizing multicurve for the deformation space $\Def(f,A,B)$ such that $f^*\Gamma=i_*\Gamma=\emptyset$. Assume that the postcritically finite map $f_{\Gamma}$ has a hyperbolic orbifold. Then the deformation space $\Def(f, A,B)$ accumulates on the stratum $S_{\Gamma}$ if and only if $f_{\Gamma}$ is Thurston equivalent to a rational map. \label{pcf}
\end{theorem} 

\begin{proof}
If the deformation space accumulates on a point $\tau$ in the stratum $S_\gamma$, that point must satisfy $f^*(\tau)=i_*(\tau)$. That makes $\tau$ the fixed point of  the Thurston pullback map for the map $f_{\Gamma}$, existence of which proves that $f_{\Gamma}$ is Thurston equivalent to a rational map.

On the other hand, let us assume that $f_\Gamma$ is Thurston equivalent to a rational map. Then there is a unique point $\tau$ on $S_\Gamma$ such that $f^*(\tau)=i_*(\tau)$. Following J.Hubbard and S.Koch \cite{HubbardK}, let us consider the space $$\qg =\bigcup_{\Gamma' \subset \Gamma} S_{\Gamma'} / \Delta_\Gamma,$$ where $\Delta_\Gamma$ is the subgroup of the $\text{MCG}(B)$ which is generated by Dehn twists about the curves of $\Gamma$. By Theorem 10.1 in \cite{HubbardK}, the set $\qg$ is a complex manifold. 

Note that since $f^*\Gamma=\emptyset$, the action of $f^*$ is equivariant with respect to $\Delta_\Gamma$: for any $\phi \in  \Delta_\Gamma$ we have $f^*=(\phi\circ f)^*$ on $\Teich(B)$. The same 
equivariance property similarly holds for $i_*$. Therefore $f^*$ and $i_*$ project to analytic maps from $\qg$ to $\Teich(A)$. Note that by assumption, $f_\Gamma$ has hyperbolic orbifold, so $\tau$ is 
the only point in $\S_\Gamma$ such that $i_* \tau=f^*\tau$. Since on the stratum $S_{\gamma}$, $f_{\gamma}$ is a hyperbolic postcritically finite map, by the proof of the Thurston's theorem on the stratum $S_{\Gamma}$, 
 $(f^*-i_*)$ is invertible. 
Hence, by the implicit function theorem,  the projection of $\Def(f,A,B)$ to $\qg$ together with $\tau$ is an $n$-dimensional manifold in a neighborhood of $\tau$.  In particular, 
 $\tau$ is an accumulation point of $\Def(f,A,B)$.
\end{proof}

Theorem \ref{pcf} allows to prove that certain deformation spaces are non-empty.
\begin{lemma} \label{lem:one_cycle} Let $f$ be a branched cover with exactly two simple critical points $a$ and $b$. If $a$ is periodic and $b = f(a)$, then $f$ is equivalent to a quadratic rational map.
\end{lemma}
\begin{proof}
This follows directly from Theorem 4.1 \cite{Levy} which states that if a degree 2 branched cover has a Thurston obstruction, it must have a Levy cycle.  Suppose $\Gamma=\{\gamma_i\}$ is such a Levy cycle; changing $f$ up to homotopy, we may assume that $\Gamma$ is forward invariant. Each $\gamma_i$ has a preimage component of degree 1, therefore $\gamma_i$ does not separate $a$ and $b$. The connected component of $\S^2 \setminus \Gamma$ containing $a$ and $b$ is then forward invariant (recall that $f(a)=b$) which implies that it contains full orbit of $a$. This contradicts the fact that $\gamma_i$ were assumed to be non-trivial.
\end{proof}


\begin{corollary} Let $f$ be a branched cover with exactly two simple critical points. Assume that the orbit of the first critical value $u$ is periodic and the second critical value $v$ does not belong to the forward orbit of $u$. Let $A$ denote the forward orbit of $u$ and $B=A\cup\{v\}$. Then $\Def(f,A,B)$ is not empty.
\end{corollary}

\begin{proof}

Take the point $w=f(u)$ of $A$. Consider a curve $\gamma$ that separates $w$ and $v$ from the rest of points in $B$. Then $\Gamma=\{\gamma\}$ satisfies the first assumption of Theorem~\ref{thm:pcf}; the map $f_\Gamma$ is a degree 2 postcritically finite branched cover for which $w$ becomes a critical value. Both critical values of $f_\Gamma$ are periodic, hence $f_\Gamma$ is equivalent to a rational map by the previous lemma. Note that we can assume that $A$ has at least 4 points, otherwise the statement of this corollary is trivially true. Therefore, the map $f_\Gamma$ has hyperbolic orbifold and all assumptions of Theorem~\ref{thm:pcf} are satisfied forcing $\Def(f,A,B)$  to accumulate on the stratum $S_\Gamma$ and, in particular, not be empty.
\end{proof}

%
%
%
In Lemma~\ref{lem:defnotacc} we  gave an example of a curve that is arithmetically equalizing, but the deformation space does not accumulate to the corresponding stratum.







\section{Topology of Deformation Space, non-contractibility}\label{sec:noncontractible}

In this section for we assume that $\Def(f,A,B)$ is projectable.

J.Milnor \cite{Milnor_quad_rational} asked whether $\Per'_n$ is irreducible. In our terminology it is equivalent to asking whether the space $M(f, A,B)$ contains one component. A priori, it can contain several components. Let $M_0$ be one of the components. Denote by $j\from M_0\to \Mod(B)$ the inclusion map. Fix a point $m_0\in M_0$. The inclusion map induces a homomorphism $j_*\from \pi_1(M_0, m_0)\to \pi_1(\Mod(B), j(m_0)).$  The group  $\pi_1(\Mod(B), j(m_0))$ can be identified with $\Map(B)$. 

Let $G_0:=I_*(\pi_1(M_0,m_0)) \cong \pi_1(M_0,m_0)/\ker j_*$ Let $\Def_0$ be a component of the deformation space $\Def(f,A,B)$ such that $\pi(\Def_0)=M_0$. 

\begin{lemma} \label{lem:fund_group_comp} The group $G_0$ is a subgroup of $G_f$. The group $G_0$ preserves $\Def_0$, $\pi_1(\Def_0)=\ker j_*$. Hence, $\Def_0$ is contractible if and only if $\ker j_*=\emptyset$.
\end{lemma}

\begin{lemma} The deformation space $\Def(f, A,B)$ is contractible if and only if $\ker j_*=\emptyset$ and $G_0=G_f$.
\end{lemma}

In particular, we have the following lemma that guarantees that certain deformation spaces are not contractible.

\begin{lemma} \label{lem:nc} Let $\Def(f,A,B)$ be a $1$-dimensional deformation space. Assume that $\gamma_1$ and $\gamma_2$ are disjoint   arithmetically equalizing curves. Then $\Def(f,A,B)$ is not contractible. \end{lemma}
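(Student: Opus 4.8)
The plan is to exhibit two non-trivial elements of $G_f$ arising from the two disjoint arithmetically equalizing curves and use them to produce a non-trivial element of $\ker I_*$, forcing $\Def_0$ to be non-contractible by the criterion established at the start of this section (a component is contractible if and only if $\ker I_* = \emptyset$). Concretely, for each $i \in \{1,2\}$, since $\gamma_i$ is arithmetically equalizing, I would first find integer weights making the arithmetic equalizing identity hold and, after raising to a suitable power (using that $T_{\gamma}^{d!}$ is always liftable, as noted just before Lemma~\ref{lem:Dehn_twist}), invoke Lemma~\ref{lem:Dehn_twist} to obtain that an appropriate power of the Dehn twist $T_{\gamma_i}$ lies in $G_f$. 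This gives two elements $g_1 = T_{\gamma_1}^{k_1}$ and $g_2 = T_{\gamma_2}^{k_2}$ of $G_f$ for suitable positive integers $k_1, k_2$.

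Next I would use that $\gamma_1$ and $\gamma_2$ are \emph{disjoint}. Disjointness implies the two Dehn twists $T_{\gamma_1}$ and $T_{\gamma_2}$ commute in $\Map(B)$, so $g_1$ and $g_2$ commute and generate an abelian subgroup of $G_f$. Moreover, since the classes $[\gamma_1]_B$ and $[\gamma_2]_B$ are distinct non-peripheral classes, the twists $T_{\gamma_1}$ and $T_{\gamma_2}$ generate a free abelian group of rank two inside $\Map(B)$; hence $\langle g_1, g_2 \rangle \cong \Z^2$. In particular $G_f$ contains a rank-two abelian subgroup $H := \langle g_1, g_2 \rangle \le G_f$.

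Now I would relate $G_f$ to the topology of $\Def_0$. Recall that $G_f = \Deck(\Teich(B)/\R(f))$ acts on $\Def(f,A,B)$ and $\piTR\colon \Def_0 \to \RC(f,A,B)$ is a Galois covering with deck group $G_0 \le G_f$. Since $\Def(f,A,B)$ is $1$-dimensional, each component $\Def_0$ is a $1$-manifold, so $\pi_1(\Def_0)$ is either trivial or infinite cyclic; in particular it cannot contain a rank-two abelian subgroup. The covering $\piTR\colon \Def_0 \to \RC(f,A,B)$ with deck group $G_0$ and the exact sequence relating $\pi_1(\RC(f,A,B))$, $\pi_1(\Def_0) = \ker I_*$, and $G_0$ then show that if $\Def_0$ were contractible (so $\ker I_* = \emptyset$ and $G_0 = G_f$), the whole of $G_f$ would have to be realized by the deck group of a covering of a $1$-manifold, which is at most cyclic. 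This contradicts $H \cong \Z^2 \le G_f$, and so $\Def(f,A,B)$ is not contractible.

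The main obstacle I anticipate is the middle step: showing rigorously that $H \cong \Z^2$ genuinely injects into $G_f$ rather than collapsing. One must verify that the integer powers $k_1, k_2$ can be chosen simultaneously so that \emph{both} $T_{\gamma_1}^{k_1}$ and $T_{\gamma_2}^{k_2}$ are liftable and arithmetically equalizing, and that the resulting elements remain independent in $G_f$ (i.e.\ the natural map $\langle T_{\gamma_1}, T_{\gamma_2}\rangle \to \Map(B)$ stays injective on the relevant powers). The disjointness of $\gamma_1$ and $\gamma_2$ is precisely what guarantees the commuting relation and the rank-two freeness, so the geometric input is the standard fact that Dehn twists along a multicurve generate a free abelian group; the care is in packaging the arithmetic-equalizing hypothesis through Lemma~\ref{lem:Dehn_twist} for each curve separately and confirming the abelian subgroup does not degenerate when passed to $G_f$.
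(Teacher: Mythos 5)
Your first two steps coincide with the paper's own proof: apply Lemma~\ref{lem:Dehn_twist} to each singleton multicurve $\{\gamma_i\}$ (after scaling the rational weight to an integer $m_i$ for which $T_{\gamma_i}^{m_i}$ is liftable) to obtain $T_{\gamma_1}^{m_1},\,T_{\gamma_2}^{m_2}\in G_f$, and then use that $\gamma_1,\gamma_2$ are disjoint, non-homotopic, essential curves to conclude that these twists generate a copy of $\Z^2$ inside $\Map(B)$, hence inside $G_f$. The obstacle you anticipate at the end is not a real one: the two applications of Lemma~\ref{lem:Dehn_twist} are independent (no simultaneous choice is needed, since $G_f$ is a group and contains each element separately), and nonzero powers of twists about distinct disjoint essential curves generate a rank-two free abelian subgroup of $\Map(B)$, which cannot collapse when viewed inside the subgroup $G_f$.

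The genuine gap is in your final step. In this paper ``$1$-dimensional'' means \emph{complex} dimension one: by Epstein's theorem, $\Def(f,A,B)$ is a smooth complex submanifold of $\Teich(B)$ of dimension $\#B-\#A$, so each component $\Def_0$, as well as $\RC(f,A,B)$ and $M_0=\pi(\Def_0)\subset\Mod(B)$, is a Riemann surface --- a real $2$-manifold, not a $1$-manifold. Your pivotal claim that ``the deck group of a covering of a $1$-manifold is at most cyclic'' therefore rests on a false premise: the deck group of a covering of a surface can be a free group of arbitrary rank, or even a surface group such as $\Z^2$ (the torus). What actually closes the argument, and what the paper uses, is this: if $\Def(f,A,B)$ were contractible, it would in particular be connected and simply connected, so (using projectability, the standing assumption of this section) it would be the universal cover of $\RC(f,A,B)\cong M_0$, giving $G_f\cong\pi_1(M_0)$; since $M_0$ is a Riemann surface with at least one puncture, $\pi_1(M_0)$ is a \emph{free} group; and a free group contains no rank-two abelian subgroup, because two commuting elements of a free group lie in a common cyclic subgroup. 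This contradicts $\Z^2\le G_f$. Note also that contractibility of a single component $\Def_0$ only gives $\ker I_*=\emptyset$; it is contractibility (in particular connectedness) of the whole space that additionally forces $G_0=G_f$, which is what lets you identify all of $G_f$ with $\pi_1(M_0)$ --- your parenthetical conflates these two statements, though under the hypothesis of the lemma (the whole space contractible) both do hold.
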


\begin{proof} Assume the contrary. Each component $M_0$ of $M(f,A,B)$ is a  Riemann surface with at least one puncture. Hence, $\pi_1(M_0)$ is a free group. If deformation space is contractible,  then $\pi_1(M_0)=G_f$. There exists $m_1,$ $m_2$ such that $T^{m_1}_{\gamma_1}, T^{m_2}_{\gamma_2}\in G_f$. This is a contradiction since $T^{m_1}_{\gamma_1}$ commutes with $T^{m_2}_{\gamma_2}$.  
\end{proof}

\subsection{Non-contractibility of the deformation space for maps in $\Per'_4$}

\begin{lemma} The deformation space $\Def(f,A,B)$, where $f\in \Per'_4$ is not contractible.
\end{lemma}

\begin{proof}

Let $\delta$ be a curve that bounds $\rho$ and $v$, and does not intersect $\gamma$.

\begin{figure}[h!]
\psfrag{0}{$0$}
\psfrag{r}{$\rho$}
\psfrag{r'}{$\rho'$}\psfrag{i}{$\infty$}\psfrag{i'}{$\infty'$}\psfrag{d2}{$\delta_2$}\psfrag{1}{$1$}\psfrag{v}{$v$}
\psfrag{1'}{$1'$}\psfrag{g1}{$\gamma_2$}\psfrag{g2}{$\gamma_1$}\psfrag{d1}{$\delta_1$}\psfrag{c}{$c$}\psfrag{d}{$\delta$}\psfrag{g}{$\gamma$}

\includegraphics[height=4.5cm]{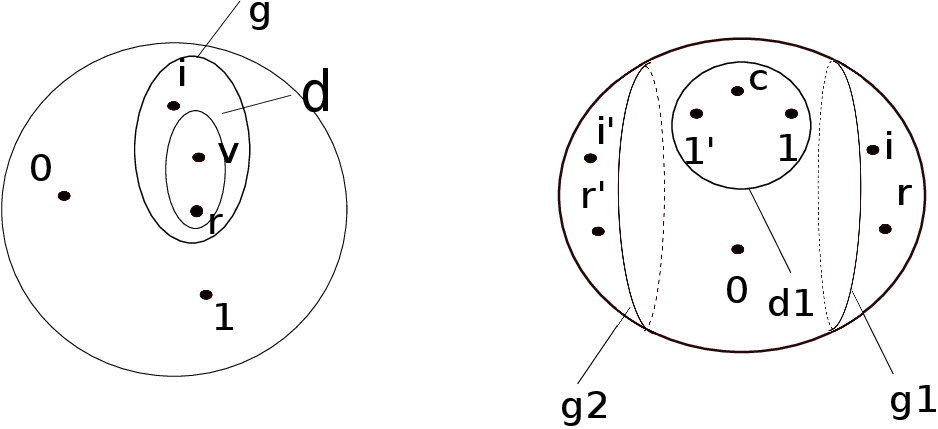}
\caption{A pair of equalizing curves $\gamma$ and $\delta$. Here and later: by  $x'$ we denote a point that has the same image as $x$. }\label{fig:gd}
\end{figure}

We see that $[\delta]_A$ is peripheral as well as $[\delta_1]_A$ where $\delta_1=f^{-1}(\delta)$ is the only connected component of the preimage of $\delta$. We conclude that $\delta$ is arithmetically equalizing. 
 
By Lemma~\ref{lem:nc}  $\Def(f, A,B)$ is non-contractible. 
\end{proof}

\subsection{Non-contractibility of the deformation spaces for maps in $\Per'_{n}$}

In this section we give an explicit construction of a pair of non-intersecting equalizing curves for maps in $\Per'_{n}$. 
This yields the following result.
\begin{theorem} There exists $f\in \Per'_n, n\ge 4$ such that $\Def(f,A,B)$  is not contractible.
\end{theorem}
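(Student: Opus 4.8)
The plan is to realize the final theorem as a direct consequence of the machinery already assembled, with the only genuinely new input being the existence, for each $n\ge 4$, of a map $f\in\Per'_n$ together with a pair of disjoint arithmetically equalizing curves. I would first invoke \emph{Lemma~\ref{lem:eq_curve}}, which is cited in the preamble of the section as providing exactly such a pair of non-intersecting equalizing curves $\gamma,\delta$ for maps in $\Per'_n$. The strategy mirrors the $\Per'_4$ case already treated: take $A=\{c_1,f(c_1),\dots,f^{n-1}(c_1)\}$ and $B=A\cup\{v\}$, so that $\Def(f,A,B)$ is $1$-dimensional by the discussion in Section~\ref{sec:quad_rat}, and then feed the two disjoint equalizing curves into the contractibility-versus-countability dichotomy.

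The key steps, in order, would be: first, fix $n\ge4$ and produce $f\in\Per'_n$ admitting disjoint arithmetically equalizing curves $\gamma$ and $\delta$ (this is the content of Lemma~\ref{lem:eq_curve}). Second, verify the projectability hypothesis so that the results of the ``Topology of the Deformation space'' section apply; for $n\ge4$ this is exactly the statement proved in the lemma showing $\Def(f,A,B)$ is projectable when $v\neq f^k(c_1)$, since $\rho$ and $s$ determine the map. Third, apply Theorem~\ref{te:rees}, valid for $f\in\Per'_n$, to conclude that each component $\Def_0$ of the deformation space is contractible. Fourth, apply Lemma~\ref{lem:nc} with the disjoint arithmetically equalizing curves $\gamma_1=\gamma$, $\gamma_2=\delta$ to conclude that $\Def(f,A,B)$ as a whole is \emph{not} contractible. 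Finally, combine these two facts through Corollary~\ref{cor:cnc}: a contractible component together with global non-contractibility forces $\Def(f,A,B)$ to consist of countably many components, because the free subgroup $G_0\cong\pi_1(M_0)$ has infinite index in $G_f$ (which contains the two commuting Dehn-twist powers $T_\gamma^{m_1}$ and $T_\delta^{m_2}$ coming from the disjoint equalizing curves).

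I expect the main obstacle to be entirely localized in the construction underlying Lemma~\ref{lem:eq_curve}: namely producing, uniformly in $n\ge4$, an explicit disjoint pair $\gamma,\delta$ of arithmetically equalizing curves. In the $\Per'_4$ case this was done by hand, with $\gamma$ a round circle of radius $\sqrt\rho$ pushed out to $\rho\to\infty$ and $\delta$ a curve enclosing $\rho$ and $v$ with peripheral (hence trivially equalizing) preimage. For general $n$ one must check that the analogous curves separate the enlarged postcritical set $\{0,1,\infty,f(1),\dots\}$ correctly and that their $f$-preimages decompose into a homotopic copy plus components that are peripheral or bound only unmarked preimages, so that $f^*\Gamma=i_*\Gamma$ holds. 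Once that combinatorial-geometric verification is in place, the deduction itself is purely formal: no new analysis is required beyond citing Theorem~\ref{te:rees}, Lemma~\ref{lem:nc}, and Corollary~\ref{cor:cnc} in sequence. Thus the proof reduces to a single sentence of assembly, and I would write it as: by Lemma~\ref{lem:eq_curve} the hypotheses of Lemma~\ref{lem:nc} and Corollary~\ref{cor:cnc} are satisfied, and Theorem~\ref{te:rees} guarantees each component is contractible, so $\Def(f,A,B)$ has countably many components.
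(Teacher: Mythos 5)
Your proposal is correct and follows essentially the same route as the paper: Lemma~\ref{lem:eq_curve} (together with the trivial addition of the disjoint curve $\delta$, recorded in Lemma~\ref{lem:pernc}) supplies the pair of disjoint arithmetically equalizing curves, and the conclusion is exactly the paper's one-line assembly of Theorem~\ref{te:rees} with Corollary~\ref{cor:cnc}. Your explicit verification of projectability for $n\ge 4$ is a hypothesis the paper leaves implicit, but it is indeed required for Lemma~\ref{lem:nc} and Corollary~\ref{cor:cnc} to apply, so including it is a sound addition rather than a deviation.
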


Recall that with the parametrization where  $0$ is a critical point, $0\to \infty\to 1$, a quadratic rational map takes the form
$$f_{b,c}(z):=1+\frac{b}{z}+\frac{c}{z^2},$$ 

We denote by $\rho_k(b,c):=f_{b,c}^{k-2}(1)$, $\rho_k=\frac{P_k(b,c)}{Q_k(b,c)}$, where $P_k(b,c)$ and $Q_k(b,c)$ are polynomials defined recursively by
$$P_2(b,c)=Q_2(b,c)=1$$
$$ P_{k+1}(b,c)= P_{k}^2(b,c)+bP_{k}(b,c)Q_{k}(b,c)+cQ_{k}^2(b,c),$$ $$ Q_{k+1}(b,c)=P_{k}^2(b,c).$$
We see that for odd $k$, $\deg P_k=\deg Q_k+1$ and for even $k$, $\deg P_k=\deg Q_k$. 

The map $f_{b,c}\in \Per_n$  if and only if $\rho_{n}(b,c)=0$. 
 
Let $p_k$, $q_k$ be the highest homogeneous part of $P_k$ and $Q_k$ correspondingly. Then for odd $k$,
$$p_{k+1}=p^2_{k}+b p_k q_k, \quad q_{k+1}=p_k^2.$$
For even $k$, 
$$p_{k+1}=q_k(b p_k+c q_k),\quad q_{k+1}=p_k^2$$ 

We will denote by $\tp_k$, $\tq_k$ reduced highest homogeneous parts of $P_k$ and $Q_k$, obtained by canceling the common factors of $p_k$ and $q_k$. Reduced homogenous parts satisfy the relation:
 $$\tp_{k+1}=\tp_{k}+b \tq_k, \quad \tq_{k+1}=\tp_k \quad \mbox{for odd k;}$$   
 $$\tp_{k+1}=\tq_k(b \tp_k+c \tq_k), \quad \tq_{k+1}=\tp_k^2 \quad \mbox{for even k.}$$
 
 Since $\tp_{2}=1$, $\tq_{2}=1$ do not have common factors, we can show inductively that $\tp_k$ and $\tq_k,$ for $k\geq 2$ do not in fact have common factors. 
 
 The equation $\rho_{n}=0$ defines an algebraic curve.  The intersection of this curve with infinity in $\mathbb{CP}^2$ is given by solutions of the homogenous equation $\tp_n(b,c)=0$.
 
  
\begin{lemma} \label{lem:eq_curve} Consider $f_{b,c}\in \Per'_n$ such that $b,c$ are large, and $\frac{b}{c}$ is close to $\alpha$.Let $\gamma_1=\{\sqrt{|c|}e^{i\theta},\theta\in [0,2\pi] \}$.  Then $\gamma=f_{b,c}(\gamma_1)$ separates $0,1, \rho_4,\dots, \rho_{n-2}$ from $v,\infty, \rho_3\dots, \rho_{n-1}$. Moreover, it is arithmetically equalizing. 
\end{lemma}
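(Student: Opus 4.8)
The plan is to analyze the positions of all marked points and of the curve $\gamma=f_{b,c}(\gamma_1)$ asymptotically as $(b,c)\to\infty$ along $\Per_n$ in the direction $b/c\to\alpha$, where $\alpha$ is a root of the homogeneous equation $\tp_n(\alpha,1)=0$ computed above (this is exactly the condition that $\Per_n$ passes through the point $[\alpha:1]$ at infinity). First I would record the leading behaviour of the postcritical points $\rho_k=P_k/Q_k$. Using the degree pattern noted before the statement — $\deg P_k=\deg Q_k+1$ for odd $k$ and $\deg P_k=\deg Q_k$ for even $k$ — together with the recursion for the reduced leading parts $\tp_k,\tq_k$, one obtains for a suitable $\alpha$ the clean dichotomy: for odd $k$ the point $\rho_k$ escapes with $|\rho_k|\asymp|c|$, while for even $k$ the point $\rho_k$ stays bounded, converging to $\tp_k(\alpha,1)/\tq_k(\alpha,1)$. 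In the same way $v=1-b^2/(4c)\sim-\alpha^2c/4$ escapes, while $0,1$ are fixed and $\infty$ sits at infinity.

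Thus, for $|c|$ large, the bounded marked points $\{0,1,\rho_4,\rho_6,\dots\}$ have modulus $O(1)$, while the escaping ones $\{\infty,v,\rho_3,\rho_5,\dots\}$ have modulus $\gtrsim|c|$. Since $1\ll\sqrt{|c|}\ll|c|$, the round annulus $\mathcal A=\{R_1<|z|<R_2\}$ with $R_1=O(1)$ and $R_2\asymp|c|$ contains no marked point, and its core separates the two groups exactly as in the statement (with even indices inside and odd indices outside). The curve $\gamma_1=\{|z|=\sqrt{|c|}\}$ lies in $\mathcal A$, so it already realizes the claimed separation rel $B$.

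Next I would compute $\gamma=f_{b,c}(\gamma_1)$. On $\gamma_1$ the term $b/z$ dominates, with constant modulus $|b|/\sqrt{|c|}=|\alpha|\sqrt{|c|}$, while $1$ and $c/z^2$ stay bounded; hence $\gamma$ lies in a thin annulus about $\{|w|=|\alpha|\sqrt{|c|}\}\subset\mathcal A$ and winds once around $0$. Consequently $f|_{\gamma_1}$ has degree $1$, $\gamma$ is homotopic to $\gamma_1$ rel $B$, and $\gamma$ separates the points as claimed. Because $f$ has degree $2$ there is exactly one further component $\gamma_2$ of $f^{-1}(\gamma)$, also of degree $1$; writing $z_1z_2=c/(1-w)$ for the two preimages of $w\in\gamma$ and using $z_1\in\gamma_1$, one finds $z_2=-c/b+o(1)=-1/\alpha+o(1)$, so $\gamma_2$ is a loop of diameter $O(1/\sqrt{|c|})$ shrinking onto the finite preimage $-c/b$ of $1$. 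As this point is unmarked and bounded away from every marked point, $\gamma_2$ is peripheral, hence trivial rel $B$. Therefore $f^*\gamma=[\gamma_1]+[\gamma_2]$ (both preimages have degree $1$) reduces to $[\gamma_1]_A=[\gamma]_A=i_*\gamma$, so $\gamma$ is (arithmetically) equalizing.

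\textbf{The main obstacle} is the first step: one must choose the root $\alpha$ of $\tp_n$ so that the whole dichotomy is non-degenerate, i.e.\ so that $\tq_k(\alpha,1)\neq0$ and the relevant leading coefficients are nonzero for all $2\le k\le n-1$, that $\alpha\notin\{0,-1\}$ (so that $\gamma_1$ and $\gamma$ scale as claimed and $\rho_3$ genuinely escapes), and that the limits $\tp_k(\alpha,1)/\tq_k(\alpha,1)$ of the bounded $\rho_k$, together with $0,1$ and the limit $-1/\alpha$ of $\gamma_2$, are mutually distinct. Establishing these non-vanishing statements for a suitable $\alpha$ — and controlling the error terms uniformly so that the separation and the shrinking of $\gamma_2$ persist for all sufficiently large $(b,c)\in\Per'_n$ near the chosen direction — is the crux; the remaining homotopy and degree bookkeeping is then routine.
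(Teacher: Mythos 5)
Your overall strategy is the paper's own: establish the parity dichotomy (odd-index $\rho_k$ and $v$ escape at rate $\asymp|c|$, even-index $\rho_k$ stay bounded), conclude that $\gamma_1$ and $\gamma=f_{b,c}(\gamma_1)$ are core curves of an unmarked annulus realizing the separation, and show that the second preimage component contributes nothing. The genuine gap is that the step you set aside as ``the main obstacle'' is precisely the mathematical content of the paper's proof, and it is not a routine check. The paper proves the existence of a suitable direction $\alpha$ as follows: it first derives divisibility relations among the reduced leading forms (if $\tp_k=0$ then $\tp_{km}=0$; if $\tp_k=0$ with $k$ odd then $\tp_{k+2m}=0$ and $\tq_{k+1+2m}=0$); then for odd $n$ it uses the factorization $\tp_n=\tp_{n-2}\left(b^2\tq_{n-2}+(b+c)\tp_{n-2}\right)$ to pick $\alpha$ a root of the second factor that is not a root of any $\tp_k$ with $k<n$, while for even $n$ it shows (using the coprimality of $\tp_k$ and $\tq_k$, established by induction beforehand) that no root of $\tp_n$ can be a root of $\tp_{2k+1}$ with $2k+1<n$. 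Since $\tq_{k+1}=\tp_k$ for odd $k$, and the common factor of $p_k$ and $q_k$ is built from odd-index $\tp_j$'s, non-vanishing of the odd-index $\tp_j$ at $\alpha$ is exactly what makes all of your ``relevant leading coefficients'' nonzero, hence what makes odd-index points escape and even-index points stay bounded. Without this argument the separation claim---the first assertion of the lemma---is unsupported, so your write-up establishes neither half of the statement.

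Separately, your treatment of $\gamma_2$ contains an error when $n$ is odd. You argue that $\gamma_2$ is a tiny loop around $-c/b+o(1)$, a point ``unmarked and bounded away from every marked point'', hence trivial rel $B$. But when $n$ is odd, $n-1$ is even, so $\rho_{n-1}$ is one of the \emph{bounded} marked points; since $f(\rho_{n-1})=\rho_n=0$, it is the finite root of $z^2+bz+c=0$, which equals $-c/b+O(1/|c|)$ and lies inside $\gamma_2$ (the disk bounded by $\gamma_2$ maps homeomorphically onto the disk inside $\gamma$, which contains $0$). So $\gamma_2$ encircles the marked point $\rho_{n-1}$: it is peripheral, not trivial---this is exactly why the paper's proof splits into the two parity cases, with $\gamma_2$ bounding $\rho_{n-1}$ together with unmarked points for odd $n$. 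Your final conclusion survives, because peripheral classes are discarded when computing $f^*\gamma$ in $\mathcal{W}(A)$, so $f^*\gamma=i_*\gamma$ either way; but the justification as you wrote it is false for half the values of $n$.
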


\begin{proof} 

Note that if $\tp_k(b,c)=0$ then $\tp_{km}(b,c)=0$ for all $m$. Also, if  $\tp_{k}(b,c)=0$, then  $\tp_{k+2m}(b,c)=0$ and $\tq_{k+1+2m}(b,c)=0$ for all $m$ if $k$ is odd.

We see that if $n$ is odd, $\tp_n=\tp_{n-2} (b^2 \tq_{n-2}+(b+c)\tp_{n-2})$ has a solution $(\alpha c,c)$ which is not a solution of $\tp_k$ for $k<n$. On the other hand if $n$ is even, then any solution  $(\alpha c,c)$ of $\tp_n$ is not a solution of $\tp_{2k+1}$ for $2k+1<n$.

We observe that under the assumptions $$f_{b,c}(\sqrt{|c|} e^{i\theta})=\alpha \sqrt{|c|}e^{-i(\theta+\arg c/2)}+O(1).$$

Since $\alpha$ is not a solution of $\tp_k(\alpha c,c)$, $\deg p_k(\alpha c, c)>\deg q_k(\alpha c,c)$ for odd $k$, $\deg p_k(\alpha c, c)\le \deg q_k(\alpha c,c)$ for even $k$. 
This yields  $\rho_k(\alpha c, c)=\frac{p_k(\alpha c, c)}{q_k(\alpha c, c)}\ge a_k c+O(1)$ ($a_k\neq 0$) for odd $k$, $\rho_k(\alpha c, c)=\frac{p_k(\alpha c, c)}{q_k(\alpha c, c)}\le b_k+O(1)$ ($b_k\neq 0$) for even $k$. Therefore,  for big enough $c$, $\gamma$ has the right separation property, and it lies in an annular neighborhood of $\gamma_1$ that does not contain any marked points.

Let $\gamma_2$ be the second preimage of the curve $\gamma$. For even $n$, $\gamma_2$ bounds unmarked preimages of $\infty, \rho_3,\dots, \rho_{n-1}$. For odd $n$, $\gamma_2$ bounds $\rho_{n-1}$ and unmarked preimages of $\infty, \rho_3,\dots, \rho_{n-2}$.
\end{proof}

\begin{figure}[h!]

\psfrag{i}{$\infty$}
\psfrag{0}{$0$}
\psfrag{1}{$1$}\psfrag{1'}{$1'$}\psfrag{i}{$\infty$}\psfrag{v}{$v$}\psfrag{i'}{$\infty'$}\psfrag{c}{$c$}
\psfrag{r1}{$\rho_3$}\psfrag{r2}{$\rho_4$}\psfrag{r3}{$\rho_5$}\psfrag{n}{$\rho_{n-1}$}\psfrag{n-1}{$\rho_{n-2}$}\psfrag{'n}{$\rho'_{n-1}$}\psfrag{'1}{$\rho'_3$}
\psfrag{g}{$\gamma$}\psfrag{d}{$\delta$}\psfrag{g1}{$\gamma_1$}\psfrag{d1}{$\delta_1$}\psfrag{g2}{$\gamma_2$}
\includegraphics[height=6cm]{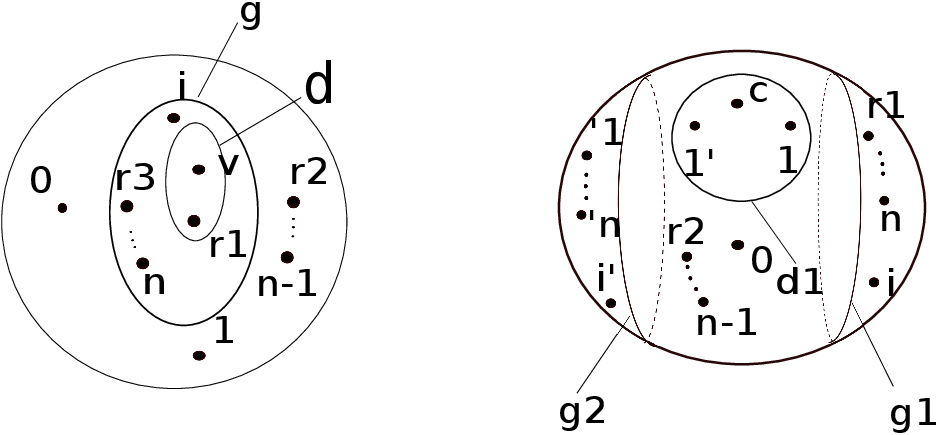}
\caption{A pair of equalizing curves for a map $f\in \Per_n$, $n$ even}\label{fig:even}
\end{figure}

\begin{figure}[h!]

\psfrag{i}{$\infty$}
\psfrag{0}{$0$}
\psfrag{1}{$1$}\psfrag{1'}{$1'$}\psfrag{i}{$\infty$}\psfrag{v}{$v$}\psfrag{i'}{$\infty'$}\psfrag{c}{$c$}
\psfrag{r1}{$\rho_3$}\psfrag{r2}{$\rho_4$}\psfrag{r3}{$\rho_5$}\psfrag{n}{$\rho_{n-1}$}\psfrag{n-1}{$\rho_{n-2}$}\psfrag{n-2}{$\rho_{n-3}$}\psfrag{'n}{$\rho'_{n-1}$}\psfrag{'1}{$\rho'_3$}
\psfrag{g}{$\gamma$}\psfrag{d}{$\delta$}\psfrag{g1}{$\gamma_1$}\psfrag{d1}{$\delta_1$}\psfrag{g2}{$\gamma_2$}
\includegraphics[height=6cm]{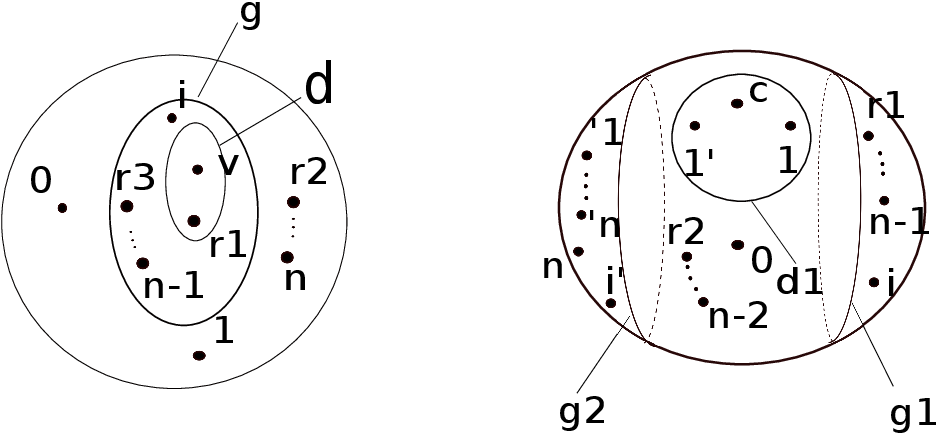}
\caption{A pair of equalizing curves for a map $f\in \Per_n$, $n$ odd}\label{fig:odd}
\end{figure}

\begin{lemma} For any $n\geq 3$ there exists $f\in \Per'_n$, and a pair of non-intersecting equalizing curves $\gamma$ and $\delta$.
\label{lem:pernc}
\end{lemma}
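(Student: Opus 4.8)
The plan is to keep the curve $\gamma$ produced in Lemma~\ref{lem:eq_curve} as the first member of the pair, and to adjoin a second curve $\delta$ modeled on the one used in the $\Per'_4$ case. Fix $f=f_{b,c}\in\Per'_n$ as in Lemma~\ref{lem:eq_curve}, with $b,c$ large and $b/c$ near the chosen root $\alpha$ of $\tp_n$, so that $\gamma=f_{b,c}(\gamma_1)$ is equalizing. The estimates in that lemma show that, up to $O(1)$, $\gamma$ is the circle of radius $|\alpha|\sqrt{|c|}$: the cycle points $\rho_k$ with $k$ odd, together with $v$ and $\infty$, have modulus of order $|c|$ and lie outside $\gamma$, while $0$, $1$, and the $\rho_k$ with $k$ even stay bounded and lie inside. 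I would let $m$ be the largest odd index with $3\le m\le n-1$ (so $m=n-1$ when $n$ is even and $m=n-2$ when $n$ is odd) and take $\delta$ to be a simple closed curve bounding exactly the two points $\rho_m$ and $v$. Since $\rho_m$ and $v$ both lie outside $\gamma$, the curve $\delta$ can be drawn inside that outer region, disjoint from $\gamma$ and from the remaining marked points; this is precisely the configuration drawn in Figures~\ref{fig:even} and~\ref{fig:odd}.

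It then remains to verify that $\delta$ is equalizing, and the computation is identical to the $\Per'_4$ case. Forgetting $v$, the curve $\delta$ encloses only the single marked point $\rho_m\in A$, so $[\delta]_A$ is peripheral and $i_*\delta=\emptyset$. For the pullback, the disk bounded by $\delta$ contains the single critical value $v$ of the second critical point and no other critical value (the critical value $\infty$ of $0$ is not enclosed); hence its full preimage is a degree-two cover of a disk branched over one point, i.e.\ again a disk, and $\delta_1:=f^{-1}(\delta)$ is a single simple closed curve. This curve encloses the unique preimage $c_2$ of $v$ together with the two preimages of $\rho_m$, namely the cycle point $\rho_{m-1}$ and one unmarked point; only $\rho_{m-1}$ lies in $A$, so $[\delta_1]_A$ is peripheral as well. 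Therefore $f^*\delta=\emptyset=i_*\delta$, so $\delta$ is equalizing and $\gamma,\delta$ form the desired disjoint pair. For $n\ge4$ they are moreover non-isotopic: the side of $\gamma$ containing $v$ also contains $\infty$ and $\rho_3$, hence at least three points of $B$, whereas $\delta$ separates the two-point set $\{\rho_m,v\}$ from the rest; distinct partitions of $B$ cannot come from isotopic curves. This non-isotopy is exactly what makes the pair usable in Lemma~\ref{lem:nc} and Corollary~\ref{cor:cnc}.

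Most of the technical weight has already been discharged in Lemma~\ref{lem:eq_curve}, where the growth rates of $\rho_k(\alpha c,c)$ determine on which side of $\gamma$ each marked point sits; the only genuinely new point here is the parity bookkeeping that selects $\rho_m$ so that it shares the outer component of $\gamma$ with $v$, which is precisely why the two figures (even and odd $n$) differ. I expect the one case needing separate comment to be $n=3$: there $B$ has only four points, the outer region of $\gamma$ contains just $v$ and $\infty$, and a four-punctured sphere carries no two disjoint non-isotopic essential curves. Consequently the recipe degenerates (there is no odd index in the range $[3,n-1]$), and any equalizing curve disjoint from $\gamma$ is forced to be isotopic to $\gamma$. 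This is consistent with the countable-components conclusion being available only for $n\ge4$, where $\Def(f,A,B)$ is projectable.
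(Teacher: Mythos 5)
Your construction coincides with the paper's: its entire proof consists of taking $\gamma$ from Lemma~\ref{lem:eq_curve} together with a curve $\delta$ encircling $v$ and the adjacent large cycle point (the paper writes ``$\rho_1$'', but Figures~\ref{fig:even} and \ref{fig:odd} confirm your choice of $\rho_{n-1}$ for $n$ even and $\rho_{n-2}$ for $n$ odd), and your Riemann--Hurwitz verification that $i_*\delta=f^*\delta=\emptyset$, plus the non-isotopy check, supplies exactly the details the paper omits. Your caveat about $n=3$ is also correct and worth keeping: on a sphere with four marked points two disjoint essential curves are necessarily isotopic, so for $n=3$ the statement holds only in the vacuous sense of parallel copies (the paper's own recipe then produces $\delta$ isotopic to $\gamma$), which is consistent with the downstream theorem being asserted only for $n\ge 4$.
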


\begin{proof} Take $f=f_{b,c}$ and $\gamma$ from the previous lemma. Take a curve $\delta$ that encircles  $v$ and $\rho_1$ and does not intersect $\gamma$. \end{proof}

\subsection{Examples of self-equivalences that preserve a component of the deformation space}

Let $\Gamma=\{\gamma_1,\dots, \gamma_n\}$ be a multicurve. Let $m_0\in M(f,A,B)$, we can identify $\pi(\Mod(B), m_0)$ with $\Map(B)$. 

\begin{lemma} \label {lem:punctures} Consider a one-dimensional deformation space $\Def(f,A,B)$. Let $\alpha:[0,1)\to \Def(f,A,B)$ be a path, such that $\lim_{t\to 1}\gamma(t)\in \mathcal{S}_{\Gamma}$, $\alpha(0)=m_0$. Then there is a   map that associates to $\alpha$, an element $g_{\alpha}\in G_f$, such that $g_{\alpha}=T^{m_1}_{\gamma_1}\dots T^{m_n}_{\gamma_n}\,$ with $m_1,\dots, m_n\in \Z \setminus \{0\}$.   Moreover, $g_{\alpha}$ preserves the component of the deformation space that contain $\alpha$. This also defines a natural map that associates to $\alpha$, $\Gamma_{\alpha}=m_1 \gamma_1+\dots +m_n\gamma_n$ an arithmetically equalizing multicurve.  
\end{lemma}


\begin{proof}
We first observe the following. Suppose that $U$ is a holomorphic disk, $z \in U$, and $q\from U\setminus\{z\}\to \Mod(B)$ is a holomorphic map  that extends continuously (across $z$) to map $U$ into the Deligne-Mumford compactification of $\Mod(B)$, and such that $q(z) \in \mathcal{S}_\gamma$ for some curve $\gamma$. Now let $\beta$ be a curve in $U \setminus\{z\}$ that winds nontrivially around $z$; then $q(\beta)$ induces a nontrivial element of $\Mod(B)$, which in fact is  $T_\gamma^m$ for some $m \neq 0$. Similar arguments show  that if $q(z) \in \mathcal{S}_\Gamma$ for some multicurve $\Gamma = \{\gamma_i\}$, then $q(\beta)$ induces a map of the form $\prod T_{\gamma_i}^{m_i}$ where all $m_i$ are nonzero. 

Now in the setting of the Lemma, $\RC(f, A, B)$ is a Riemann surface of finite type, and we must have $\piTR(\alpha(t)) \goesto z$ as $t \goesto 1$, for some $z \in \overline{\RC(f, A, B)} \setminus \RC(f, A, B)$. Thus we can find a disk $U \subset \overline{\RC(f, A, B)}$ that maps into $\Mod(B)$ (by $\piRM$) as described in the previous paragraph. 


Then take any closed curve $\beta$ in $\Mod(B)$ that is the image of a curve in $U \setminus \{z\}$ that winds once around $z$. We find that the image $g_{\alpha}=q_{\beta}$ of $\beta$ in $\Map(B)$ has the form $g_{\alpha}=T^{m_1}_{\gamma_1}\dots T^{m_n}_{\gamma_n}$, where each $m_n \neq 0$. 
\end{proof}

\section{Appendix: Classification of equalizing multicurves for $\Per'_4$ curve}

We say that two multicurves $\Gamma_1$ and $\Gamma_2$ in $(\S^2, B)$ are in the same homology class if there exist an element $h\in \Map(B)$ such that $\Gamma_2=h \Gamma_1$. Note that if two multicurves are in the same homology class then $S_{\Gamma_1}$ and $S_{\Gamma_2}$ in the augmented Teichm\"{u}ller space $\overline{\Teich}(B)$ project to the same stratum in the Deligne-Mumford compactification of $\Mod(B)$. 
The homology class of a multicurve is determined by the partitions the curves of the multicurve induce on the set $B$.

If the set $B$ contains five points, each non-empty multicurve contains one or two curves. In this case, the homology class of a multicurve $\Gamma$ is determined by the partition of these five points into 1) two subsets if $\Gamma$ contains one curve, 2) three subsets if $\Gamma$ contains two curves. 

We consider a refined notion of equivalence that we call {\it $f$-equivalence}: two multicurves $\Gamma_1$ and $\Gamma_2$ are {\it $f$-equivalent} if there exists a liftable element $h$ of $\Map(B)$ such that $\Gamma_1=h\Gamma_2$. 

\begin{lemma} The map $f^*$ is well-defined as a map from classes of $f$-equivalent multicurves to homology classes of multicurves.
\end{lemma}

\proof
If $h$ is a liftable element and $h\Gamma_1=\Gamma_2$ then
$$(f^*h) (f^*\Gamma_1)=f^*(h \Gamma_1)=f^*(\Gamma_2).$$
Hence, $f^*\Gamma_1$ and $f^*\Gamma_2$ belong to the same homology class.
So, the map $f^*$ is well-defined on the classes of $f$-equivalence.
\qed

Note that there are $[H_f:\Map(B)]$ $f$-equivalence classes for each homology class of multicurves.

\begin{lemma} The map $i_*$ is well-defined on homology classes and on $f$-equivalence of  multicurves. 
\end{lemma}

\proof Let $h\in \Map(B)$ be such that $\Gamma_2=h\Gamma_1$. Then 
$$(i_*h) (i_*\Gamma_1)= i_*(h\Gamma_1)=i_*\Gamma_2.$$ So, $i_*$ is well-defined on homology classes. As a corollary it is well-defined on $f$-equivalence classes. 
\qed

\begin{lemma} Suppose that the deformation space $\Def(f,A,B)$ is one-dimensional. Then there is a natural map from punctures of $M(f,A,B)$ to $f$-equivalence classes that contain arithmetically equalizing multicurves.   
\end{lemma}

\begin{proof} Let $m_0$ be a base point on a one-dimensional deformation space $\Def(f,A,B)$. 
Consider a path $\alpha$ in the moduli space that starts at $\pi(m_0)$ and ends at  a puncture $p$ of  $M(f,A,B)$, i.e.  $\alpha\from [0,1)\to M(f,A,B)$,  $\alpha(0)= \pi(m_0)$, and  $\lim_{t\to1}\alpha(t)=p$.
By Lemma \ref{lem:punctures},  each such path when lifted to the deformation space $\Def(f,A,B)$ defines an equalizing multicurve $\Gamma_{\alpha}$ and an element of the mapping class group $g_{\alpha}$, which is a composition of Dehn twists about curves in $\Gamma_{\alpha}$. For any two such paths $\alpha$ and $\beta$, there exists $h\in j_*\pi_1(M(f,A,B))\subset \Map(B)$ such that $h g_{\alpha}h^{-1}=g_{\beta}$. Thus, $h(\Gamma_{\alpha})=\Gamma_{\beta}$. Since $h\in j_*\pi_1(M(f,A,B))$, $h$ is liftable. Hence, the map from punctures to $f$-equivalence classes of arithmetically equalizing multicurves is well-defined. 
\end{proof}

Now let us consider in more details what happens for $\Per'_4$. Recall that $\Per'_4$ is a sphere with $10$ punctures: $0$, $1$, $\infty$, $1/2$, $\rho^0_1$, $\rho^0_2$, $\rho^1_1$, $\rho^1_2$, $\rho^{\rho}_1$, $\rho^{\rho}_2$. If $\Gamma_1$ and $\Gamma_2$ are $f$-equivalent, then partitions of $A$ defined by $f^*\Gamma_1$ and $f^*\Gamma_2$ are the same. So $f$-equivalence classes are classified by pairs of partitions $(P_B, P_A)$ such that partition of $A$ can be induced by partition of $B$.   
 
We classify $f$-equivalence classes that contain equalizing multicurves. If we take a multicurve $\Gamma$, such that $f^*\Gamma$ and $i_*\Gamma$ induce the same partition of $A$, then there is an equalizing multicurve in the $f$-equivalence class of $\Gamma$. In the case of $\Per'_4$ this can be verified by explicitly constructing these curves, for example, in Lemma~\ref{l:AEC} we construct such a curve for case (6). We conjecture the same holds in the general case.

Note that not all multicurves in a $f$-equivalence class are equalizing (except for some special cases).

In the classification below we note which equivalence classes contain multicurves that are images of punctures in the sense of the previous lemma. 
In each case we list the partition of $B$ and draw the equalizing curves and their preimages.
We denote by $\infty', 1', \rho'$ unmarked preimages of $1, \rho, 0$.

\begin{table}[]
\begin{tabular}{|c|c|c|c|c|}
\hline
&Partition of $B$ & Arithmetically Equalizing & Puncture of $M$ & Thurston Matrix   \\ \hline
 1&  $ (v,0)(\infty, 1,\rho)$  & Yes &$\rho_1^0$, $\rho_2^{0}$ & 0 \\ \hline
2 &   $ (v,1)(0,\infty, \rho)$ & Yes & $\rho_1^{1}$, $\rho_2^{1}$ & 0\\ \hline
3 & $ (v, \rho)(0,\infty, 1)$   & Yes & $\rho_1^{\rho}$, $\rho_2^{\rho}$  &0\\ \hline
4 & $ (v, \infty)(0,1,\rho)$  & Yes & 1/2  &0\\ \hline
5 & $ (v,0,1)(\infty, \rho)$  & No &    &1/2\\ \hline
6 & $(v, \infty,\rho)(0,1)$ & Yes & $\infty$  &1\\ \hline
7 & $ (v,\rho)(\infty)(0,1)$   & Yes &    & $\begin{bmatrix}
1 & 0 
\end{bmatrix}$\\\hline
8 & $(v,1)(0)(\infty,\rho)$   & No &    & $\begin{bmatrix}
1/2 & 0 
\end{bmatrix}$\\ \hline
9 & $(v,0)(1)(\infty,\rho)$   & No &    & $\begin{bmatrix}
1/2 & 0 
\end{bmatrix}$\\ \hline
10 & $(v,\infty)(0)(1,\rho)$   & Yes &  1 & $\begin{bmatrix}
1 & 0 
\end{bmatrix}$\\ \hline
11 & $(v,\infty)(1)(0,\rho)$   &  Yes & 0 & $\begin{bmatrix}
1 & 0 
\end{bmatrix}$\\ \hline
12a & $(v,\infty)(\rho)(0,1)$   & No &   & $\begin{bmatrix}
1 & 1 
\end{bmatrix}$\\ \hline
12b & $(v,\infty)(\rho)(0,1)$   & Yes &   & $\begin{bmatrix}
1 & 1 
\end{bmatrix}$\\ \hline
13 & $(0,1)(v)(\infty, \rho)$   & No &   & $\begin{bmatrix}
1 & 1/2 
\end{bmatrix}$\\ \hline

\end{tabular}
\caption{Equivalence classes of equalizing multicurves for $\Per'_4$.}
\end{table}

Cases (1)-(6) are all possible cases when the equalizing multicurve contains exactly one  curve. 
Assume that the curve $\delta$ splits the points $0,1,\infty, \rho$ and $v$ into two sets of two and three points correspondingly. First, we consider the case when $v$ belongs to the set of two points. If the second point in this set is either $0,1$ or $\rho$, then the curve is necessarily arithmetically equalizing, $f^*(\delta)=i_*(\delta)=\emptyset$. That produces cases (1)-(3).

\begin{enumerate} 
\item $P_B=(v,0)(\infty, 1,\rho)$ 
\begin{figure}[h!]
\psfrag{0}{$0$}\psfrag{1}{$1$}\psfrag{i}{$\infty$}\psfrag{g}{$\delta$}\psfrag{v}{$v$}
\psfrag{r}{$\rho$}
\includegraphics[height= 3.5cm]{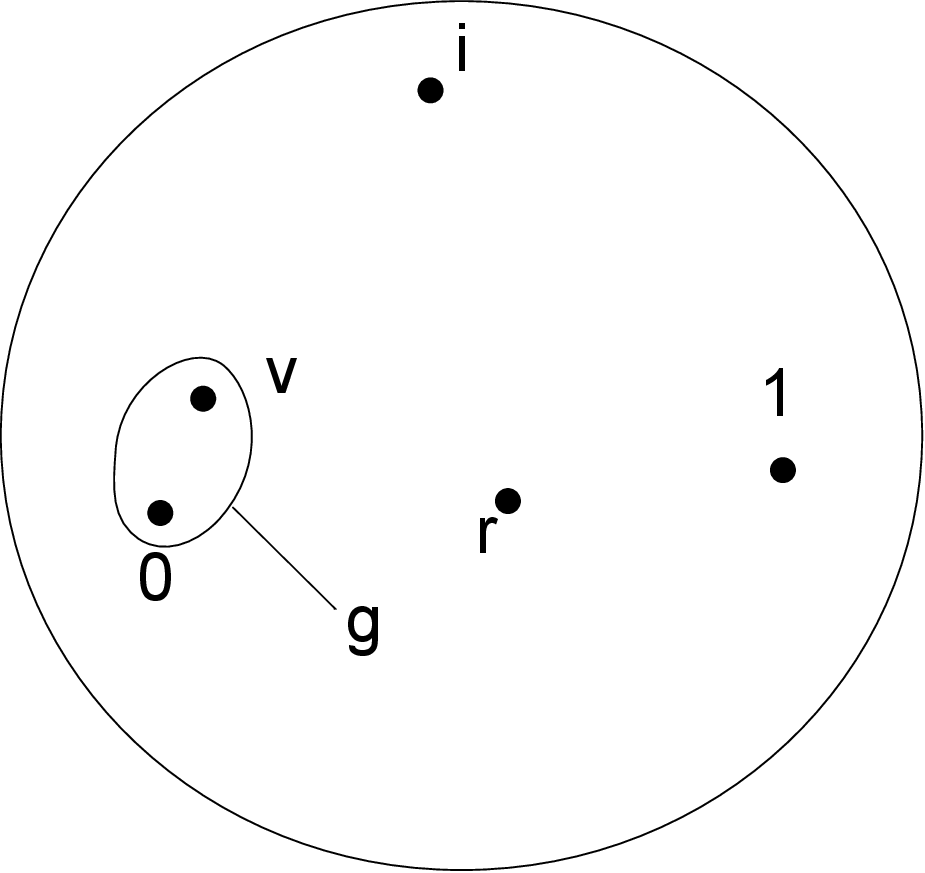}
\end{figure}


This class is images of punctures $\rho_1^0$, $\rho_2^{0}$.

\item $P_B=(v,1)(0,\infty, \rho)$ 


This class is images of punctures $\rho_1^{1}$, $\rho_2^{1}$ 

\item $P_B=(v, \rho)(0,\infty, 1)$ 


This class is images of punctures $\rho_1^{\rho}$, $\rho_2^{\rho}$ 
\noindent

\item $P_B=(v, \infty)(0,1,\rho)$ 

If the curve $\delta$ is equalizing, then it has to  split the preimages as described on the following picture:
\begin{figure}[h!]
\psfrag{0}{$0$}\psfrag{1}{$1$}\psfrag{1'}{$1'$}\psfrag{i}{$\infty$}\psfrag{i'}{$\infty'$}\psfrag{g}{$\delta$}\psfrag{v}{$v$}\psfrag{c}{ {$c$}}
\psfrag{r}{$\rho$}\psfrag{r'}{$\rho'$}\psfrag{g1}{$\delta_1$}\psfrag{g2}{$\delta_2$}

\includegraphics[height= 4cm]{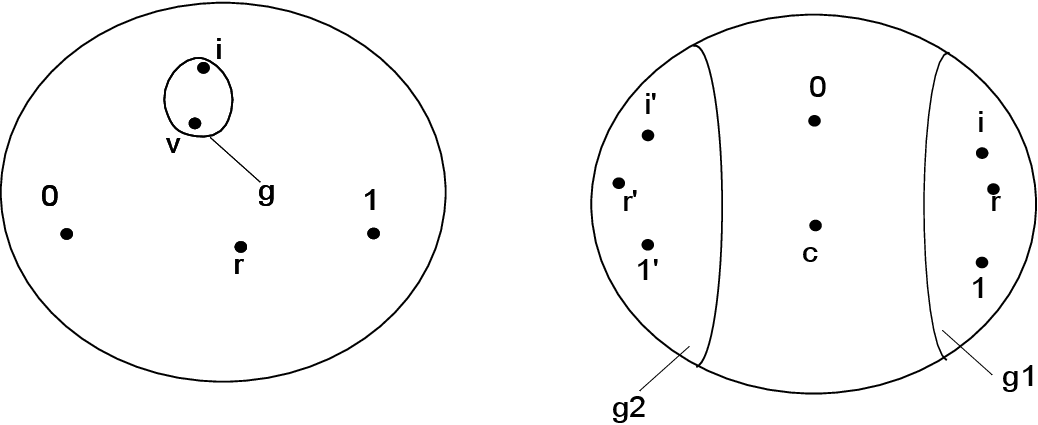}
\end{figure}
\end{enumerate}

 $f^{-1}(\delta)=\delta_1\cup \delta_2$
  
 $f^*([\delta])=i_*([\delta])=\emptyset$, $\delta_1, \delta_2$ are trivial.

 
Note that in the case $\delta$ is arithmetically equalizing 
 
This class is image of the puncture $\rho=\frac{1}{2}.$

\noindent Assume that the partition of $v$ contains two other points. These points should alternate in the cycle. Hence, we have two case $(v, 0,1)$ or $(v,\infty,\rho)$. It is convenient for us to denote the equalizing curve in the next two examples by $\gamma$.

\begin{enumerate}
\item[(5)] $P_B=(v,0,1)(\infty, \rho)$

$f^{-1}(\gamma)=\gamma_1$

$f^*([\gamma])=1/2 [\gamma_1]$, $i_*([\gamma])=[\gamma_1]$. In this case the curve $\gamma$ is topologically equalizing, but it is not arithmetically equalizing.

\begin{figure}[h!]
\psfrag{0}{$0$}\psfrag{1}{$1$}\psfrag{1'}{$1'$}\psfrag{i}{$\infty$}\psfrag{i'}{$\infty'$}\psfrag{g}{$\gamma$}\psfrag{v}{$v$}\psfrag{c}{$c$}
\psfrag{r}{$\rho$}\psfrag{r'}{$\rho'$}\psfrag{g1}{$\gamma_1$}\psfrag{g2}{$\gamma_2$}

\includegraphics[height=5cm]{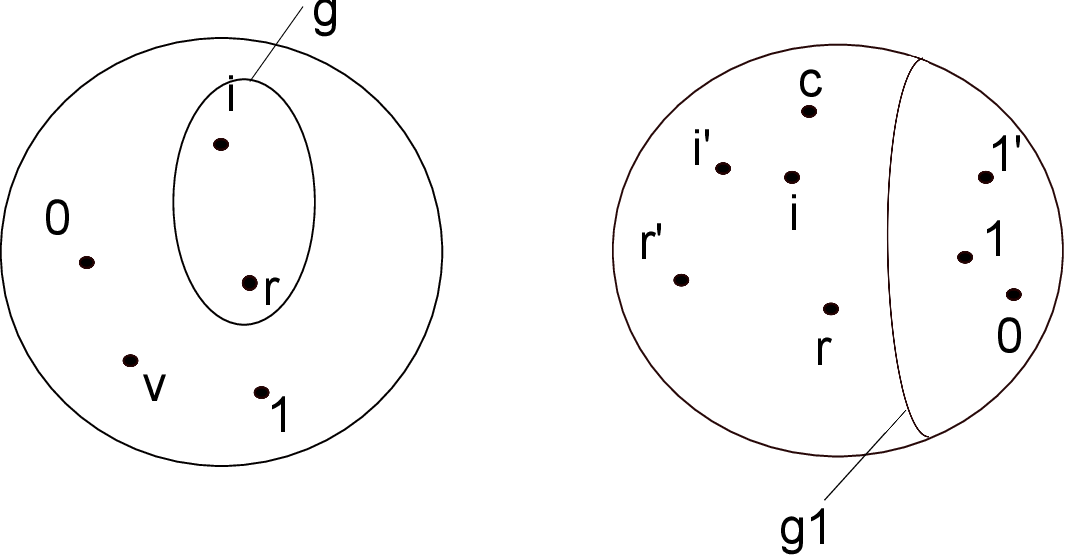}

\end{figure}

\item[(6)] $P_B=(v, \infty,\rho)(0,1)$

\begin{figure}[h!]
\psfrag{0}{$0$}\psfrag{1}{$1$}\psfrag{1'}{$1'$}\psfrag{i}{$\infty$}\psfrag{i'}{$\infty'$}\psfrag{g}{$\gamma$}\psfrag{v}{$v$}\psfrag{c}{$c$}
\psfrag{r}{$\rho$}\psfrag{r'}{$\rho'$}\psfrag{g1}{$\gamma_1$}\psfrag{g2}{$\gamma_2$}
\includegraphics[height= 4.5cm]{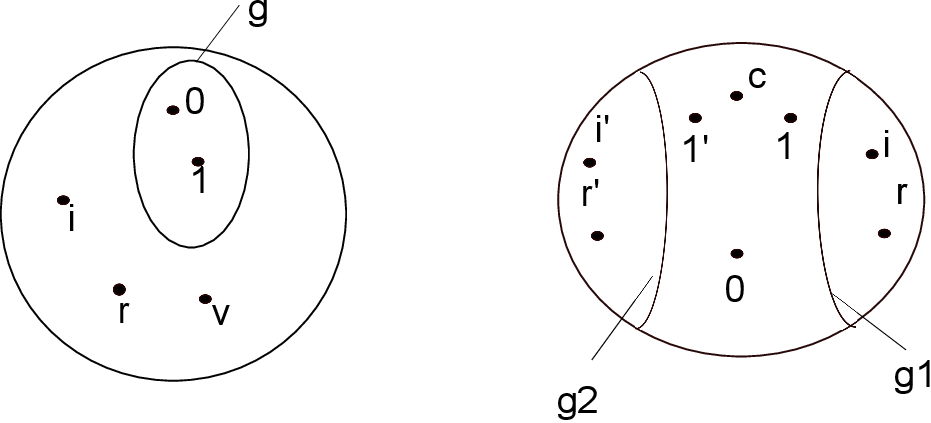}
\end{figure}

In order for $\gamma$ to be equalizing, we need to have the following picture:

 $f^{-1}(\gamma)=\gamma_1\cup \gamma_2$
 
 $f^*([\gamma])=i_*([\gamma])=[\gamma_1]$, $\gamma_2$ is peripheral.

This class is image of the puncture $\rho=\infty$.

\end{enumerate}

\noindent Now let us consider multicurves consisting of two curves, say $\gamma$ and $\delta$, which partition points $(0,1,\infty,\rho, v)$ into three groups of $2$ points inside $\gamma$, $1$ point between $\gamma$ and $\delta$,  and $2$ points inside $\delta$. First, we consider the case when the critical point $v$ belongs to a group, consisting of two points (bounded by the curve $\delta$).  If the second point inside 
$\delta$ is $0,1$ or $\rho$, $f^*([\delta])=i_*([\delta])=\emptyset$, the curve $\delta$ is itself equalizing. Hence, the curve $\gamma$ needs to be equalizing and disjoint from
$\delta$. This implies that $\gamma$ should be of type $(5)$ or $(6)$. That leaves us with the following possible configurations.

\begin{enumerate}

\item[(7)] $P_B=(v,\rho)(\infty)(0,1)$ 

The curve $\delta$ bounds $(v,\rho)$, and $\gamma$ bounds $(0,1)$.  Then $\delta$ has to be of type (3), and $\gamma$ of type (6). This pair of curves is arithmetically equalizing.


\item[(8)]  $P_B=(v,1)(0)(\infty,\rho)$

The curve $\delta$ bounds $(v,1)$, and $\gamma$ bounds $(\infty, \rho)$. Then $\delta$ has to be of type (2), and $\gamma$ of type (5).  This pair of curves is equalizing, but not arithmetically equalizing.


\item[(9)] $P_B=(v,0)(1)(\infty,\rho)$ 

The curve $\delta$ bounds $(v,0)$, and $\gamma$ bounds $(\infty, \rho)$. Then $\delta$ has to be of type (1), and $\gamma$ of type (5). This pair of curves is equalizing, but not arithmetically equalizing.


\end{enumerate}

\noindent If the second point is $\infty$, then the set consisting of one point is either  $\{0\}$, $\{1\}$, $\{\rho\}$.

\begin{enumerate}

\item[(10)] $P_B=(v,\infty)(0)(1,\rho)$. 

The curve $\gamma$ bounds $(v,\infty)$, the curve $\delta$ bounds $(1,\rho)$.

\begin{figure}[h!]
\psfrag{0}{$0$}\psfrag{1}{$1$}\psfrag{1'}{$1'$}\psfrag{i}{$\infty$}\psfrag{i'}{$\infty'$}\psfrag{d}{$\delta$}\psfrag{v}{$v$}\psfrag{c}{$c$}
\psfrag{r}{$\rho$}\psfrag{r'}{$\rho'$}\psfrag{d1}{$\delta_1$}\psfrag{g}{$\gamma$}\psfrag{g1}{$\gamma_1$}
\psfrag{d2}{$\delta_2$} \psfrag{g2}{$\gamma_2$}
\includegraphics[height= 4cm]{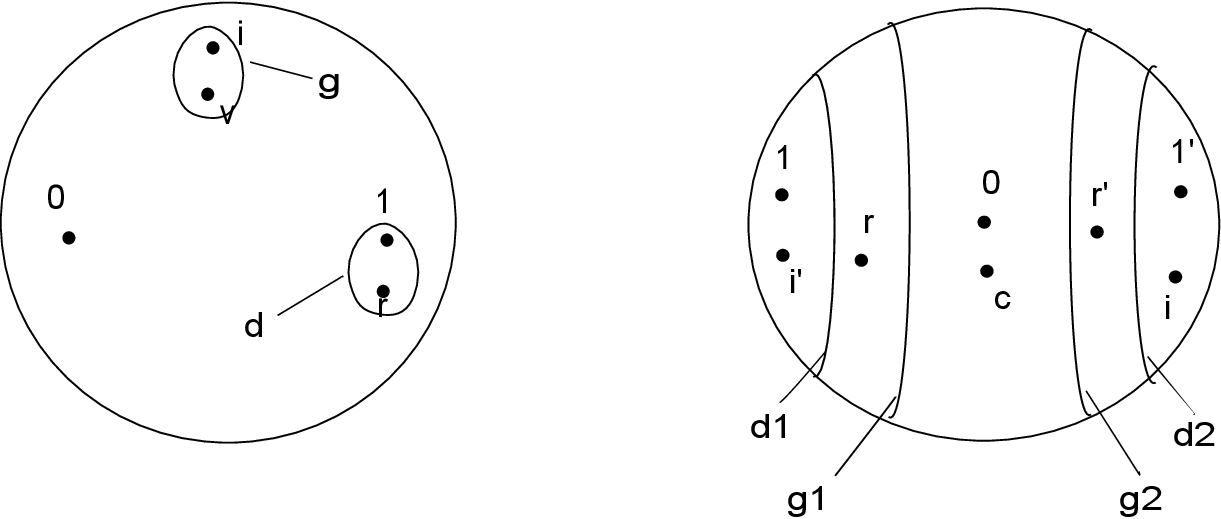}
\end{figure}

$f^{-1}(\gamma)=\gamma_1\cup \gamma_2$,$f^{-1}(\delta)=\delta_1\cup \delta_2$

 $\gamma_2$, $\delta_1$ and $\delta_2$ are peripheral.

$f^*([\gamma])=[\gamma_1]$, $i_*([\gamma])=\emptyset$, $f^*([\delta])=\emptyset$, $i_*([\delta])=[\gamma_1]$

In the bases $(\gamma,\delta)$ and $\gamma_1$, Thurston's matrix has the form
$T=(1,0)$ and forgetful matrix has the form $A=(0,1)$

The multicurve $(\gamma,\delta)$ is arithmetically equalizing.

This equalizing class is the image of the puncture 1.
\item[(11)] $P_B=(v,\infty)(1)(0,\rho)$. 

The curve $\gamma$ bounds $(v,\infty)$, the curve $\delta$ bounds $(0,\rho)$.

\begin{figure}[h!]
\psfrag{0}{$0$}\psfrag{1}{$1$}\psfrag{1'}{$1'$}\psfrag{i}{$\infty$}\psfrag{i'}{$\infty'$}\psfrag{d}{$\delta$}\psfrag{v}{$v$}\psfrag{c}{$c$}
\psfrag{r}{$\rho$}\psfrag{r'}{$\rho'$}\psfrag{d1}{$\delta_1$}\psfrag{g}{$\gamma$}\psfrag{g1}{$\gamma_1$}
\psfrag{d2}{$\delta_2$} \psfrag{g2}{$\gamma_2$}
\includegraphics[height= 4cm]{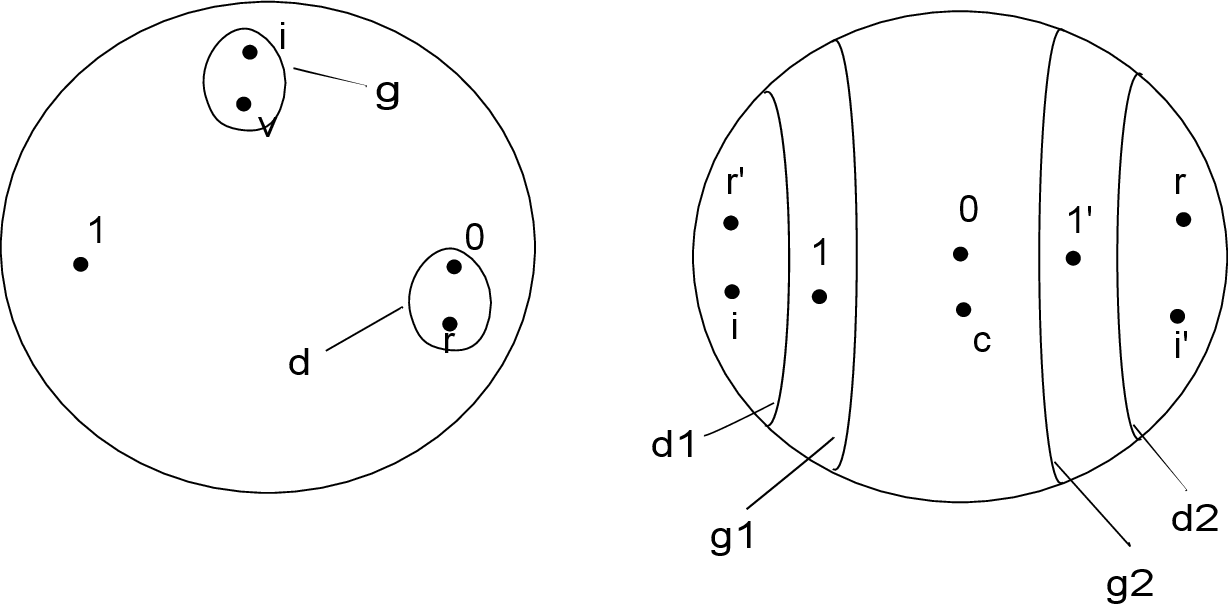}
\end{figure}

$f^-1(\gamma)=\gamma_1\cup \gamma_2$,$f^{-1}(\delta)=\delta_1\cup \delta_2$

$\gamma_2$, $\delta_1$ and $\delta_2$ are peripheral.

$f^*([\gamma])=[\gamma_1]$, $i_*([\gamma])=\emptyset$, $f^*([\delta])=\emptyset$, $i_*([\delta])=[\gamma_1]$

In the bases $(\gamma,\delta)$ and $\gamma_1$, Thurston's matrix has the form
$T=[1,0]$ and forgetful matrix has the form $A=[0,1]$

The multicurve $(\gamma,\delta)$ is arithmetically equalizing.

This equivalence class is images of the puncture  0.

\end{enumerate}

In the next case $P_B=(v,\infty)(\rho)(0,1)$, we get two different equivalence classes:

\begin{enumerate}
\item[(12a)] $P_B=(v,\infty)(\rho)(0,1)$ 

The curve $\delta$ bounds $(v, \infty)$, $\gamma$ bounds $(0,1)$.

\begin{figure}[h!]
\psfrag{0}{$0$}\psfrag{1}{$1$}\psfrag{1'}{$1'$}\psfrag{i}{$\infty$}\psfrag{i'}{$\infty'$}\psfrag{d}{$\delta$}\psfrag{v}{$v$}\psfrag{c}{$c$}
\psfrag{r}{$\rho$}\psfrag{r'}{$\rho'$}\psfrag{d1}{$\delta_1$}\psfrag{g}{$\gamma$}\psfrag{g1}{$\gamma_1$}
\psfrag{d2}{$\delta_2$} \psfrag{g2}{$\gamma_2$}
\includegraphics[height= 4cm]{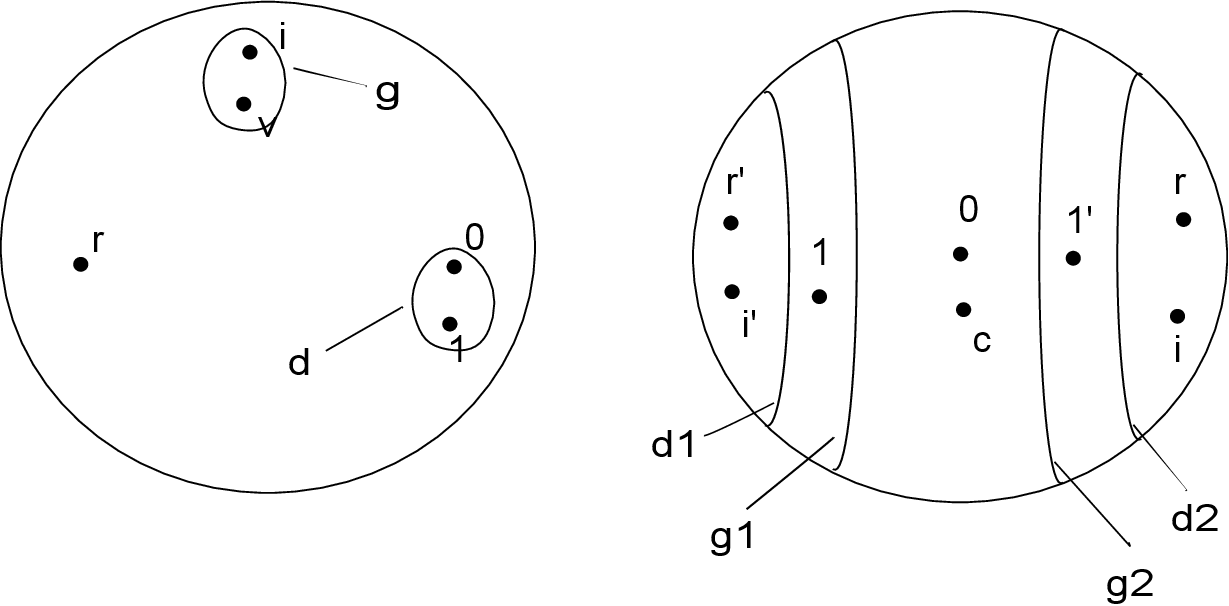}
\end{figure}

$f^{-1}(\gamma)=\gamma_1\cup \gamma_2$,$f^{-1}(\delta)=\delta_1\cup \delta_2$ 

$\gamma_1$, $\delta_1$ are peripheral, $[\gamma_2]=[\delta_2]$

$f^*([\gamma])=[\gamma_2]$, $i_*([\gamma])=\emptyset$, $f^*([\delta])=[\gamma_2],$ $i_*([\delta])=[\gamma_2]$. 

In the bases $(\gamma,\delta)$ and $\gamma_2$, Thurston's matrix has the form $(1,1)$ and 
projection matrix has the form $(0,1)$. The curve $\delta$ is arithmetically equalizing. The multicurve $(\gamma,\delta)$ is equalizing, but not arithmetically equalizing.

\item[(12b)] $P_B=(v,\infty)(\rho)(0,1)$

Both $\gamma$ and $\delta$ are are arithmetically equalizing, $\delta$ is of type (4), $\gamma$ is of type (6). 

\end{enumerate}
\noindent The last remaining case is when the point $v$ belongs to the group with one $1$ point. Then the remaining points should group in alternating order $(0,1)$ and $(\rho,\infty)$.

\begin{enumerate}

\item[(13)] $P_B= (0,1)(v)(\infty, \rho)$
\begin{figure}[h!]
\psfrag{0}{$0$}\psfrag{1}{$1$}\psfrag{1'}{$1'$}\psfrag{i}{$\infty$}\psfrag{i'}{$\infty'$}\psfrag{d}{$\delta$}\psfrag{v}{$v$}\psfrag{c}{$c$}
\psfrag{r}{$\rho$}\psfrag{r'}{$\rho'$}\psfrag{d1}{$\delta_1$}\psfrag{g}{$\gamma$}\psfrag{g1}{$\gamma_1$}
\psfrag{d2}{$\delta_1$} \psfrag{g2}{$\gamma_2$}
\includegraphics[height= 4cm]{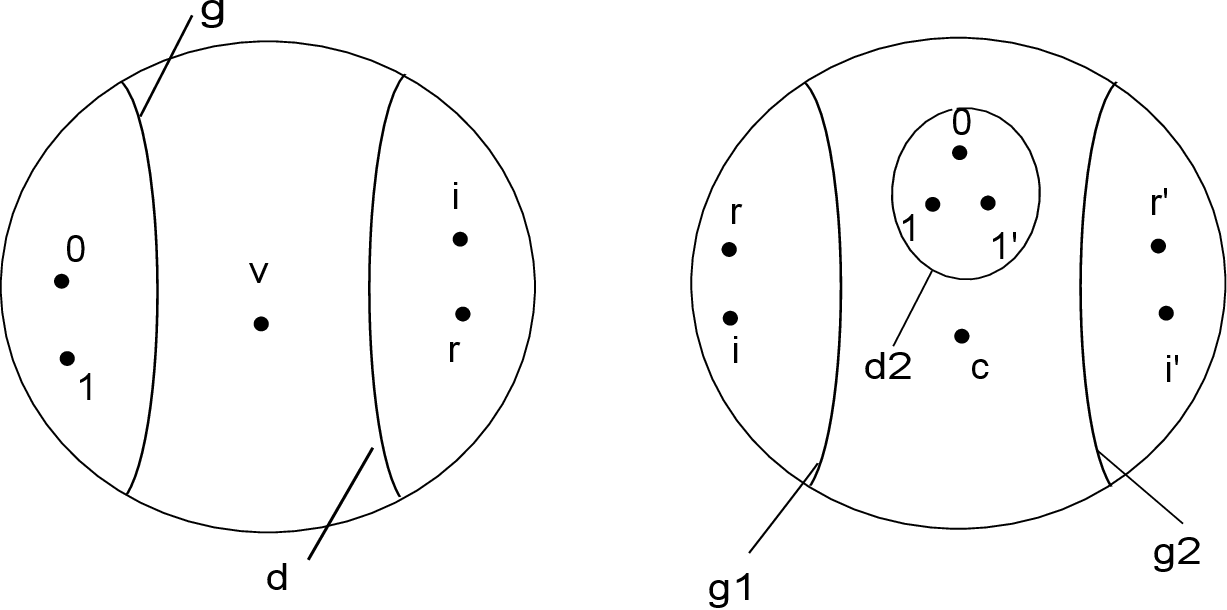}
\end{figure}

$f^{-1}(\gamma)=\gamma_1\cup \gamma_2$,$f^{-1}(\delta)=\delta_1$ 

$\gamma_2$, is peripheral, $[\gamma_1]=[\delta_1]$

$f^*([\gamma])=[\gamma_1]$, $i_*([\gamma])=[\gamma_1]$, $f^*([\delta])=1/2[\gamma_1],$ $i_*([\delta])=[\gamma_1]$. 

Thurston matrix has the form $(1,1/2)$, forgetful martrix has the form $(1,1)$. The curve $\gamma$ is arithmetically equalizing. The multicurve $(\gamma,\delta)$ is equalizing, but not arithmetically equalizing.

\end{enumerate}

\bibliographystyle{amsalpha}
\bibliography{biblio}

\end{document}